\let\urlorig\url
\renewcommand{\url}[1]{%
  \begin{otherlanguage}{english}\urlorig{#1}\end{otherlanguage}%
}
\newtheorem{lemma}{Lemma}
\newtheorem{corollary}{Corollary}
\newtheorem{proposition}{Proposition}
\newtheorem{theorem}{Theorem}
\newtheorem{remark}{Remark}
\newcommand{\sC}{{\mathscr C}}
\newcommand{\vol}{{v}}
\newcommand{\bv}{{v}}
\newcommand{\R}{{\mathbb R}}
\newcommand{\Z}{{\mathbb Z}}
\newcommand{\capp}{{\kappa}}
\newcommand{\Ham}{\mathscr{H}}
\newcommand{\eps}{\varepsilon}
\begin{document}
\title{On the well-posedness of a quasi-linear \\Korteweg-de Vries equation}
\author{C. Mietka\thanks{Universit\'e de Lyon,
CNRS UMR 5208,
Universit\'e Lyon 1,
Institut Camille Jordan,
43 bd 11 novembre 1918;
F-69622 Villeurbanne cedex}}
\maketitle

\begin{abstract}  
The Korteweg-de Vries equation (KdV) and various generalized, most often \emph{semilinear} versions have been studied for about 50 years. Here, the focus is made on a \emph{quasi-linear} generalization of the KdV equation, which has a fairly general Hamiltonian structure.
This paper presents a local in time well-posedness result, that is existence and uniqueness of a solution and its continuity with respect to the initial data. The proof is based on the derivation of energy estimates, the major interest being the method used to get them. The goal is to make use of the structural properties of the equation, namely the skew-symmetry of the leading order term, and then to control subprincipal terms using suitable gauges as introduced by Lim \& Ponce (SIAM J. Math. Anal., 2002) and developed later by Kenig, Ponce \& Vega (Invent. Math., 2004) and S. Benzoni-Gavage, R. Danchin \& S. Descombes (Electron. J. Diff. Eq., 2006). The existence of a solution is obtained as a limit from regularized parabolic problems. Uniqueness and continuity with respect to the initial data are proven using a Bona-Smith regularization technique.
\end{abstract}

{\small \paragraph {\bf Keywords:}
quasilinear dispersive equation, energy estimates, gauging technique, parabolic regularization, Bona-Smith technique
}

\tableofcontents

\section{Introduction and main result}

More than a century ago, D.J. Korteweg and G. de Vries proposed a model for unidirectional long water waves propagating in a channel. The so-called Korteweg-de Vries equation
$$\text{(KdV)} \quad v_t + v v_x + v_{xxx} = 0 \,,$$
in fact derived earlier by Boussinesq, drew a lot of attention in the 1960's, when it turned out that it was {\it completely integrable}, see e.g. one of the seminal papers by Gardner, Green, Kruskal \& Miura \cite{GGKM67,GardnerKDV} or the book by Ablowitz \cite{Ablowitz} for a modern overview. 

It was soon considered in generalized forms 
$$\text{(gKdV)} \quad v_t + p(v)_x + v_{xxx} = 0 \,.$$
In particular, the {\it modified} KdV equation 
$$\text{(mKdV)} \quad v_t + v^2v_x + v_{xxx} = 0 \,.$$
is also completely integrable. This is not the case for more general nonlinearities. Nevertheless, (KdV) and (gKdV) have been studied by analysts for about $50$ years. The state of the art regarding (KdV) is mainly due to Bona \& Smith \cite{Bona-Smith}, Kato \cite{kato75}, Kenig, Ponce \& Vega \cite{KPV93, KPV96} and Christ, Colliander \& Tao \cite{CCT03}. For (gKdV), it is due mainly to Kenig, Ponce \& Vega \cite{KPV93mkdv} and Colliander, Keel, Staffilani, Takaoka \& Tao \cite{CKSTT03}.

In this article, we consider a quasi-linear version of the Korteweg-de Vries equation, in which the dispersive term is not reduced to $v_{xxx}$. This equation is the most natural generalization of the abstract, Hamiltonian form of (gKdV), which reads
\begin{equation}
\label{eq:hamil}
v_t = \left(\delta \Ham[v]\right)_x
\end{equation}
with 
\begin{equation}
\label{eq:hamil2}
\Ham[v] = \frac{1}{2}v^2_x + f(v)\,,
\end{equation}
 and $f'(v) = -p(v)$. Our motivation for considering this generalization comes from the so-called Euler-Korteweg system, which involves an energy of the form \eqref{eq:hamil2}
where $\capp$ is not necessarily a constant. 
The link with the Euler-Korteweg system is that their travelling waves share the same governing ODE, with additional connections in the stability of their periodic waves, see \cite{BMR1}. 

Our qKdV equation \eqref{eq:hamil} \eqref{eq:hamil2} reads in a more explicit form
\begin{equation}
\label{eq:qkdv}
\partial_t v + \partial_x (p(v)) + \partial_x\left( \sqrt{\capp(v)} \partial_x( \sqrt{\capp(v)} \partial_x v) \right) = 0 \,.
\end{equation}
Up to our knowledge, the Cauchy problem regarding this quasilinear equation has never been investigated. An apparently more general nonlinear KdV equation 
$$v_t + f(v_{xxx}, v_{xx}, v_x, v)=0\,,$$
 was studied by Craig, Kappeler, \& Strauss in \cite{CKS} and more recently by Linares, Ponce, \& Smith in \cite{LPS}. However, they use a monotonicity assumption on the nonlinearity, $\partial_{v_{xx}}f(v_{xxx}, v_{xx}, v_x, v) \leq 0$, which reads $\partial_x \capp(v) \leq 0$ for \eqref{eq:qkdv}. Unless $\capp$ is constant, their results are thus hardly applicable to \eqref{eq:qkdv}. Our approach is to use the {\it structure} of the equation instead of a monotonicity argument.

Of course, Eq. \eqref{eq:qkdv}, includes the semi-linear generalized KdV equations, where $\capp(v) = 1$ and the nonlinearity $p$ is polynomial. In particular $p(v) = \frac12 v^2$ corresponds to the classical KdV equation and $p(v) = \frac13 v^3$ to the modified KdV equation. As said before, these two cases are known to fall into the class of integrable equations. Remarkably enough, there is a non constant $\capp$ for which (qKdV) is completely integrable, namely $\capp(v) = \frac{\eps^2}{12}(v+a)^{-3}$, with $a \in \R$, $\eps >0$ and $p(v) = \frac{v^2}{2}$, see \cite{DBanff}. 

We focus on the local-in-time well-posedness of (qKdV) in Sobolev spaces, that is existence and uniqueness of a smooth solution $v$ given smooth initial data $v_0$, with continuity of the mapping $v_0 \mapsto v$. The most important part of the work is based on obtaining a priori estimates, using the skew-symmetric form of the leading order term and gauging techniques to control subprincipal remainders. The idea of using gauges for dispersive PDEs, introduced by Lim \& Ponce in \cite{limponce02} and developed by Kenig, Ponce \& Vega in \cite{KPV04} and later by S. Benzoni-Gavage, R. Danchin \& S. Descombes in \cite{BDD1d, BDDmultid}, is a fairly general method to deal with subprincipal terms.

In what follows, we consider an interval $I \subset \R$, and solutions of \eqref{eq:qkdv} are sought with values in a compact subset $J$ of $I$. The functions $\capp : I \to \R^{+*}$ and  $p : I \to \R$ are supposed to be smooth. 
We consider an integer $k$ and denote by $H^k(\R)$ the classical Sobolev space constructed on $L^2(\R)$ as
$$H^k(\R) = \{v \in L^2(\R)\,, \ \partial_x^l v \in  L^2(\R)\ for\ all\ | l | \leq k\}\,.$$

The inner product in $L^2(\R)$ will be denoted for all $u$ and $v$ by $\left<u|v \right>$.
Our main result is the following. 

\begin{theorem}\label{thm:qKdV}
Assume that $k\geq 4$. If $p=-f': I \subset \R \to \R$  is $\sC^{k+1}$ and $\capp:I \to \R^{+*}$ is $\sC^{k+2}$, then for all $\bv_0\in H^k(\R)$, the image of $\bv_0$ being in $J \subset \subset I$, there exists a time $T>0$ and a unique $\bv\in \sC(0,T;H^k(\R)) \cap \sC^1(0,T;H^{k-3}(\R))$ solution to \eqref{eq:qkdv} with initial data $v_0$. Moreover, $\bv_0\mapsto \bv$ maps continuously $H^k(\R)$ into $\sC(0,T;H^k(\R)) \cap \sC^1(0,T;H^{k-3}(\R))$.
\end{theorem}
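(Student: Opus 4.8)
The plan is to prove local well-posedness of the quasilinear equation \eqref{eq:qkdv} by a standard four-stage program adapted to the quasilinear dispersive setting: a priori energy estimates via a gauged modification of the $H^k$ norm, construction of solutions as limits of parabolically regularized problems, uniqueness, and finally continuity of the data-to-solution map via a Bona--Smith argument. Let me sketch each stage and flag where the genuine difficulty lies.

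First, the core of the matter is the a priori estimate. Differentiating \eqref{eq:qkdv} $k$ times, the equation for $w := \partial_x^k v$ reads $\partial_t w + \capp(v)\partial_x^3 w + (\text{lower-order in } w)\,\partial_x^2 w + \dots = F$, where $F$ collects commutator and nonlinear terms of order $\leq k+2$ in $v$. The leading operator $a\,\partial_x^3$ with $a=\capp(v)>0$ is skew-symmetric only up to the first-order correction $\tfrac32 a_x\partial_x^2$, which does not close a naive $L^2$ estimate on $w$ because it produces a term of the same order as the energy dissipation, which is absent. The plan is therefore to introduce a gauge: replace $w$ by $\widetilde w = \varphi(v)\,w$ for a well-chosen scalar weight $\varphi$, or equivalently work with a modified inner product $\langle \varphi(v) u \,|\, \varphi(v) v\rangle$, so that the offending $\partial_x^2\widetilde w$ term is cancelled. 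Following Lim--Ponce \cite{limponce02} and Benzoni-Gavage, Danchin \& Descombes \cite{BDD1d}, the correct choice is $\varphi = \capp(v)^{\alpha}$ with $\alpha$ fixed by matching the coefficient of $\partial_x^2$; one verifies that this choice eliminates the subprincipal obstruction and leaves only a zeroth-order-in-$\widetilde w$ remainder that is controlled by $\|v\|_{H^k}$ through Moser-type nonlinear estimates and the Sobolev embedding $H^{k}\hookrightarrow W^{1,\infty}$ (using $k\geq 4$, which also guarantees enough regularity of $\capp(v)$ and $p(v)$ under the stated $\sC^{k+2}$, $\sC^{k+1}$ hypotheses). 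This yields a differential inequality $\tfrac{d}{dt}\,|\!|\!|v|\!|\!|_k^2 \leq C(|\!|\!|v|\!|\!|_k)\,|\!|\!|v|\!|\!|_k^2$ for the gauged norm $|\!|\!|\cdot|\!|\!|_k$, which is equivalent to $\|\cdot\|_{H^k}$ as long as $v(x)$ stays in the compact set $J$.

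For existence, the plan is to regularize: add a parabolic term $-\eps\,\partial_x^4 v^\eps$ (or $+\eps\,\partial_x^2$) to \eqref{eq:qkdv}, for which local existence in $H^k$ follows by a fixed-point or Galerkin argument since the regularized equation is a quasilinear parabolic problem. The essential point is that the energy estimate of the previous step must be carried out \emph{uniformly in} $\eps$: the parabolic term contributes a favorably signed dissipation and the gauge cancellation is unaffected, so the a priori bound gives a lifespan $T>0$ and an $H^k$ bound independent of $\eps$. Passing to the limit $\eps\to 0$ then requires compactness; I would extract weak-$*$ limits in $L^\infty(0,T;H^k)$ together with strong convergence in $\sC(0,T;H^{k-1}_{\mathrm{loc}})$ from the uniform bound on $\partial_t v^\eps$ in $H^{k-3}$ (read off from the equation) via Aubin--Lions, which suffices to pass to the limit in the nonlinear terms and to obtain a solution in $L^\infty(0,T;H^k)\cap \mathrm{Lip}(0,T;H^{k-3})$; the continuity-in-time regularity $\sC(0,T;H^k)$ is then recovered a posteriori by the Bona--Smith scheme.

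Uniqueness and continuous dependence both rest on estimating differences of solutions, and this is where the Bona--Smith technique enters and where I expect the main obstacle. The difficulty is that, the equation being \emph{quasilinear}, the difference $v_1-v_2$ of two solutions satisfies an equation whose leading coefficient $\capp(v_1)$ differs from $\capp(v_2)$, so a difference estimate performed at the top order $H^k$ loses derivatives through terms like $(\capp(v_1)-\capp(v_2))\partial_x^3 v_2$. The standard remedy is to estimate the difference in a \emph{weaker} norm—here $L^2$ or $H^{k-3}$—where the gauged energy method still applies and the loss is absorbable, giving uniqueness and Lipschitz dependence in the low norm. To upgrade this to continuity of $v_0\mapsto v$ in the \emph{strong} $\sC(0,T;H^k)$ topology, I would mollify the initial data, $v_0^\delta = \chi(\delta D)v_0$, prove that the corresponding solutions $v^\delta$ converge to $v$ in $H^{k}$ by combining the high-norm uniform bound, the low-norm Lipschitz estimate, and the quantitative smoothing estimate $\|v_0-v_0^\delta\|_{H^k}=o(1)$ with the matching gain $\|v_0^\delta\|_{H^{k+s}}=O(\delta^{-s})$, in the now-classical Bona--Smith balancing of these two rates. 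Making the gauge construction interact cleanly with the difference equation—so that the top-order cancellation survives when the coefficient itself is being differenced—is the delicate technical heart of the argument, and the choice of gauge weight must be coordinated with the Bona--Smith exponents to close the estimate.
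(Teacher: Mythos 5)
Your outline coincides with the paper's strategy in all four stages---gauged energy estimates in the spirit of Lim--Ponce and Benzoni-Gavage--Danchin--Descombes, a fourth-order parabolic regularization \eqref{eq:regul}, and a Bona--Smith argument for uniqueness and continuous dependence---so the proposal is sound in outline, but the execution differs in two places worth comparing. For the a priori bound the paper does not differentiate with $\partial_x^k$: it applies the weighted operator $(\alpha(v)\partial_x)^k$, which reproduces the skew-symmetric leading term $\partial_x\,\alpha\,\partial_x\,\alpha\,\partial_x(\cdot)$ exactly at every order (Proposition \ref{prop:structure}), leaving a single second-order remainder $f_k\partial_x^2 v_k$ with the explicit coefficient $f_k=(k-1)\alpha\,\partial_x\alpha$, so that the gauge comes out in closed form, $\phi_k=\alpha^{-(k-1)/3}$, and the price is a separate proof that $\|\phi_k v_k\|_{L^2}$ plus lower-order terms is equivalent to $\|v\|_{H^k}$. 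Your variant (plain $\partial_x^k$, then a scalar gauge $\capp(v)^{\alpha}$ with a $k$-dependent exponent) is the same idea and should close, at the cost of messier commutators. For existence the paper does not use weak-$*$ compactness and Aubin--Lions: it mollifies the data at an $\eps$-dependent rate $\eta(\eps)=\eps^{\beta}$ and proves directly that $(v_\eps)$ is a Cauchy sequence in $\sC(0,T;H^s)$ (Lemma \ref{lemma:estimates_p}), so existence, uniqueness and continuity of the flow map all come from one and the same set of Bona--Smith estimates; your two-step route (compactness first, then Bona--Smith a posteriori to recover continuity in time and in the data) is a legitimate alternative. Note also that your claim that ``the gauge cancellation is unaffected'' by the regularizing term glosses over the commutators of the weights with $\eps^4\partial_x^4$; the paper devotes an interpolation lemma (Lemma \ref{lemma:eps4}) to absorbing them into the dissipation, and your argument will need the same step.

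One point where your diagnosis is off target and would become a gap if implemented as stated: in the difference estimate the term $(\capp(v_1)-\capp(v_2))\partial_x^3 v_2$ is in fact harmless at low order, since $\partial_x^3 v_2\in L^\infty$ for $k\geq 4$ and the coefficient is Lipschitz in $z=v_1-v_2$. The genuine obstruction, already at the $L^2$ level, is that differences of the \emph{subprincipal} coefficients get differentiated and produce terms that are second order in $z$ (schematically $m(v_1,v_2)\,\partial_x v_2\,\partial_x^2 z$), whose pairing with $z$ leaves an uncontrolled $\int c\,(\partial_x z)^2$; this is precisely the term $\partial_x\bigl(\tfrac12\alpha_\eps^2\bigr)z_{xx}$ in \eqref{eq:diff}, and it is why the paper introduces a second gauge $\phi_{\eps,\delta}$ defined by the ODE \eqref{eq:gauge_eps} in the low-norm difference estimate. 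Any implementation of your plan needs this device (or an estimate in a negative-index norm) even for plain uniqueness, not only in the high-norm Bona--Smith step, so the ``delicate heart'' you flag must be resolved exactly there.
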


\begin{remark}
From the above statement, by standard arguments one may then build maximal solutions and a close inspection of our estimates shows that the maximal lifespan is finite if and only if the $H^4$ norm of the solution blows up at the final time.
\end{remark}

The proof of this theorem is based on the derivation of a priori estimates for a regularized parabolic problem. We establish these a priori estimates on smooth solutions by taking advantage of the structure of Eq. \eqref{eq:qkdv}. More precisely, we use the skew-symmetry of its leading order term and then gauging techniques to deal with subprincipal remainders. Regarding the existence of solutions, we shall use a fourth order parabolic regularization of (qKdV) and pass to the limit. Uniqueness and continuity of the mapping $v_0 \mapsto v$ are proved by means of a priori estimates and a technique adapted from Bona \& Smith \cite{Bona-Smith}.  

We introduce the notations $a = p'= -f''$ and $\alpha = \sqrt{\capp}$. For convenience, we use the same notation for both functions $v \mapsto \alpha(v)$ and $(t,x)\mapsto (\alpha \circ v) (t,x)$. In particular, $\alpha'$ stands for the derivative of $\alpha(v)$ with respect to $v$ and $\partial_t \alpha$ and $\partial_x \alpha$ for the time and space derivatives of $\alpha \circ v$. We use the same convention for the function $a$ and all nonlinear functions of the dependent variable $v$, unless otherwise specified. To keep notations compact, and hopefully easier to read, we also omit all parentheses in operators. For example, the expression 
$$ \partial_x (\alpha(v) \partial_x( \alpha(v) \partial_xv))\,,$$
will just be denoted by
$$ \partial_x \alpha \partial_x \alpha \partial_xv\,.$$
With these conventions, Eq. (\ref{eq:qkdv}) becomes 
\begin{equation}
\label{eq:motion}
\vol_t + a \vol_x  + \partial_x \alpha \partial_x \alpha \partial_x \vol = 0\,,
\end{equation}
as far as smooth solutions are concerned.


\section{A priori energy estimates in Sobolev spaces}

In this section, we investigate {\it a priori} bounds in $H^k(\R)$ for smooth solutions of \eqref{eq:motion} with $k \geq 4$. Following ideas from \cite{limponce02, BDD1d, BDDmultid}, we shall make use of the structure of the equation and of gauges, in order to cancel out bad commutators. An ideal structure that would allow a direct computation of energy estimates is 
$$ v_t = \text{skew-symmetric terms} + \text{zero$^{th}$ order terms}\,.$$
Taking the inner product of this kind of equation with $v$, we see that the skew-symmetric terms cancel out and we readily find 
$$\frac{d}{d t}\|v\|_{L^2} \lesssim \|v\|_{L^2}\,.$$
Even if Eq. \eqref{eq:motion} has not exactly this ideal structure, the presence of first order terms can also be handled, up to an integration by parts, not directly in $L^2(\R)$ but in higher order Sobolev spaces. If we intend to get energy estimates in higher order Sobolev spaces than $L^2(\R)$, we had better be sure that this structure is preserved when differentiating the equation to avoid derivatives loss. In what follows, we adapt a method from Lim \& Ponce \cite{limponce02}, and use weighted Sobolev spaces, with weights also called {\it gauges}, to transform our equation into a convenient structure and derive a priori estimates without loss of derivatives.

\subsection{Conservation of the  skew-symmetric structure}
Let us first focus on the leading order term in \eqref{eq:motion}. When looking for an a priori estimate in $L^2(\R)$, we compute the time derivative of $\|v\|_{L^2}$ by taking the inner product of the equation with $v$. On the skew-symmetric leading order term, an integration by parts yields 
$$\left< \partial_x \alpha \partial_x \alpha \partial_x v | v \right> = - \left< \partial_x \left(\alpha \partial_x v\right) | \alpha \partial_x v \right>  =  0\,. $$ 
If we apply the differential operator $\partial_x$ to Eq. \eqref{eq:motion}, we lose the skew-symmetry property and the corresponding cancellation. Our aim is to preserve this cancellation throughout the entire differentiation process. For this purpose, we consider the weighted quantity 
\begin{equation}
\forall k \geq 0 \,, \quad \vol_k = \left(\alpha(v) \partial_x\right)^k \vol \,,
\end{equation}
instead of $\partial^k_x v$.
This quantity is well defined if $v$ is smooth enough. If we apply the formal operator $ \left(\partial_x(\alpha \  \cdot \ )\right)^k$ to Eq. \eqref{eq:motion}, we see that the higher order terms have the same form as in the original equation. They read $ \partial_x \alpha \partial_x \alpha \partial_x v_k$, and thus cancel out in the inner product with $v_k$.

Following a definition in \cite{Serre} (\S 3.6), we call {\it weight} the total number of space derivatives in a monomial expression involving a function and its own derivatives. As we will see, when we compute derivatives by applying repeatedly the operator $\partial_x(\alpha \cdot)$ to Eq. \eqref{eq:motion}, the coefficients of the remainders we will have to deal with will be products of polynomial functions of $v$ and its derivatives with functions of $\alpha(v)$ or $a(v)$ and their derivatives. As a result, they are polynomials in the variables $v_k$ with terms multiplied by functions of $v$ but we extend the definition of weight to this kind of non-polynomial functions. The weight is merely the total number of derivatives. 
\begin{proposition}
\label{prop:structure}
For a smooth solution of Eq. \eqref{eq:motion}, if we denote $v_k = \left(\alpha(v) \partial_x\right)^k v$, then the equation satisfied by $v_k$ is of the form
\begin{equation}
\label{eq:motion_k}
\partial_t \vol_k + a \partial_x \vol_k + \partial_x \alpha \partial_x \alpha \partial_x \vol_k = f_k \partial^2_x \vol_k + g_k \partial_x \vol_k + h_k \,,
\end{equation}
where 
\begin{itemize}
\item $f_k= f_k(v,v_1)$ is of (homogeneous) weight 1.
\item $g_k= g_k(v,v_1,v_2)$ is of (homogeneous) weight 2.
\item $h_k= h_k(v,v_1, \cdots , v_k)$ consists of two terms, one of (homogeneous) weight $k+1$ and another of (homogeneous) weight $k+3$.
\end{itemize}
\end{proposition}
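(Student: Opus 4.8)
The plan is to argue by induction on $k$, organising the computation around the two operators $\mathcal{S} := \alpha\,\partial_x$, for which $v_{k+1}=\mathcal{S}v_k$ by definition, and $\mathcal{P} := \partial_x\alpha\partial_x\alpha\partial_x$, the skew-symmetric leading term of \eqref{eq:motion_k}. The base case $k=0$ is \eqref{eq:motion} itself, with $f_0=g_0=h_0=0$. For the inductive step I would not differentiate \eqref{eq:motion_k} blindly, but rather compute
\[
\partial_t v_{k+1} = (\partial_t\alpha)\,\partial_x v_k + \mathcal{S}\,\partial_t v_k,
\]
and substitute the expression for $\partial_t v_k$ supplied by the induction hypothesis \eqref{eq:motion_k}. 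The factor $\partial_t\alpha=\alpha'\,\partial_t v$ is then rewritten by means of \eqref{eq:motion}, which exhibits it as the sum of a weight-$1$ part (coming from the transport term $-a\partial_x v$) and a weight-$3$ part (from the dispersive term); this splitting is the source of the two homogeneous pieces of $h_{k+1}$.

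The heart of the argument is the leading dispersive contribution $\mathcal{S}(-\mathcal{P}v_k)$. Noting that $\mathcal{S}^2 v_k = v_{k+2}$ gives $\mathcal{P}=\partial_x\mathcal{S}^2$, so that $\mathcal{P}v_k=\partial_x v_{k+2}$, I would use the commutator identity
\[
[\mathcal{S},\mathcal{P}] = -(\partial_x\alpha)\,\mathcal{P},
\]
which follows from $[\mathcal{S},\partial_x]=-(\partial_x\alpha)\partial_x$ and $[\mathcal{S},\mathcal{S}^2]=0$. Hence $\mathcal{S}(-\mathcal{P}v_k) = -\mathcal{P}v_{k+1} + (\partial_x\alpha)\mathcal{P}v_k$: the leading skew-symmetric operator reproduces itself on $v_{k+1}$, which is exactly the structure that must be preserved, and the only correction is $(\partial_x\alpha)\mathcal{P}v_k$. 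The transport term is dealt with the same way, the dangerous second derivative cancelling to leave
\[
\mathcal{S}(-a\,\partial_x v_k) = -a\,\partial_x v_{k+1} + (a\,\partial_x\alpha - \alpha\,\partial_x a)\,\partial_x v_k.
\]

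The remaining and most laborious step is the reorganisation of every term produced into the template $f_{k+1}\partial_x^2 v_{k+1}+g_{k+1}\partial_x v_{k+1}+h_{k+1}$. The guiding principle is that $\mathcal{S}v_k=v_{k+1}$, so any occurrence of $v_{k+2}$ or $v_{k+3}$ is merely $\alpha\partial_x v_{k+1}$, resp. $\alpha\partial_x(\alpha\partial_x v_{k+1})$, while conversely $\partial_x v_k,\partial_x^2 v_k,\partial_x^3 v_k$ are rewritten as combinations of $v_{k+1},\partial_x v_{k+1},\partial_x^2 v_{k+1}$ with coefficients that are functions of $v,v_1,v_2$. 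Applying this to the correction $(\partial_x\alpha)\mathcal{P}v_k=(\partial_x\alpha)\partial_x v_{k+2}$ yields precisely a weight-$1$ coefficient in front of $\partial_x^2 v_{k+1}$ (feeding $f_{k+1}$) and a weight-$2$ coefficient in front of $\partial_x v_{k+1}$ (feeding $g_{k+1}$). The images $\mathcal{S}(f_k\partial_x^2 v_k)$ and $\mathcal{S}(g_k\partial_x v_k)$ of the old remainders feed $f_k$ forward into $f_{k+1}$ and produce $\partial_x f_k$ — a weight-$2$ function of $v,v_1,v_2$ — inside $g_{k+1}$, which is exactly why $f$ is allowed to depend only on $(v,v_1)$ whereas $g$ also involves $v_2$. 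Everything else, namely the parts that are zeroth order in derivatives of $v_{k+1}$ together with $\mathcal{S}(h_k)$ and the two pieces of $(\partial_t\alpha)\partial_x v_k$, lands in $h_{k+1}$, and a weight count shows these carry weight $k+2$ or $k+4$ and involve no variable beyond $v_{k+1}$.

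I expect the main obstacle to be exactly this last bookkeeping: one must verify that \emph{no} term of top weight $k+4$ survives outside the $f_{k+1}\partial_x^2 v_{k+1}$ and $g_{k+1}\partial_x v_{k+1}$ slots, i.e. that the variables $v_{k+2},v_{k+3}$ enter only linearly and only through $\partial_x v_{k+1}$ and $\partial_x^2 v_{k+1}$. This is the no-loss-of-derivatives phenomenon, and it rests entirely on the commutator identity above and on the exact cancellation of the competing weight-$(k+4)$ contributions — a cancellation already visible at $k=1$, where it forces $f_1=g_1=0$. Once the top-weight terms are controlled in this way, the weight and dependence claims for $f_{k+1},g_{k+1},h_{k+1}$ follow by routine tracking of how $\partial_x$, multiplication by $\alpha,a$ and their derivatives, and the substitution $\partial_t v=-a\,\partial_x v-\mathcal{P}v$ raise the weight.
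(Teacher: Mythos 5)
Your overall strategy is the same as the paper's: induct on $k$, apply a first-order operator built from $\alpha$ to pass from the equation on $v_k$ to that on $v_{k+1}$, and track weights and admissible arguments. The only real difference is the operator you differentiate with: you apply $\mathcal{S}=\alpha\partial_x$ and invoke the (correct) identity $[\mathcal{S},\mathcal{P}]=-(\partial_x\alpha)\mathcal{P}$ for the leading operator $\mathcal{P}=\partial_x\alpha\partial_x\alpha\partial_x$, whereas the paper applies $\partial_x(\alpha\,\cdot\,)$, under which the leading term reproduces itself exactly, $\partial_x\bigl(\alpha\,\mathcal{P}v_k\bigr)=\mathcal{P}v_{k+1}$, with no commutator correction at all (the price being a correction on the time-derivative term, analogous to your $(\partial_t\alpha)\partial_x v_k$). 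These two bookkeepings are equivalent; your identities for the transport term and for the redistribution of $(\partial_x\alpha)\partial_x v_{k+2}$ into the $f_{k+1}\partial_x^2 v_{k+1}$ and $g_{k+1}\partial_x v_{k+1}$ slots check out and reproduce the paper's recursion \eqref{eq:induction}, in particular $f_{k+1}=f_k+\alpha' v_1$ and hence $f_k=(k-1)\alpha' v_1$.

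There is, however, one concrete point where your uniform induction starting at $k=0$ does not go through as stated: the step $k=1\to k+1=2$. After substituting $v_t=-\tfrac{a}{\alpha}v_1-\partial_x v_2$ into $(\partial_t\alpha)\partial_x v_k$, one of the leftover terms is $-\tfrac{\alpha'}{\alpha}\,(\partial_x v_2)\,v_{k+1}$. For $k\geq 2$ this is a legitimate weight-$(k+4)$ piece of $h_{k+1}$, since $\partial_x v_2$ carries three derivatives, which is $\leq k+1$. For $k=1$, however, it carries three derivatives of $v$ while $h_2$ is only allowed to depend on $v,v_1,v_2$, so your claim that everything landing in $h_{k+1}$ ``involves no variable beyond $v_{k+1}$'' fails at this step; the term must instead be read as a weight-$2$ coefficient times $\partial_x v_{k+1}$ and absorbed into $g_2$. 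This is precisely why the paper computes the cases $k=1$ and $k=2$ by hand and starts its recursion only at $k=2$. The point is not cosmetic: if $\partial_x v_2$ were left inside $h_2$, the induction hypothesis on the admissible arguments of $h_k$ (and hence the later $L^2$ estimate of $h_k$ by $\|v\|_{H^k}$ via Lemma~\ref{lemma:hk}) would be violated at every subsequent step. With that one regrouping — or, equivalently, treating $k=1,2$ explicitly and inducting from $k=2$ — your argument is complete.
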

In Eq. \eqref{eq:motion_k}, we kept a skew-symmetric leading order term in the left hand side on purpose and we gathered commutator terms in the right hand side. These are subprincipal terms because commutator terms between two differential operators of order respectively $p_1$ and $p_2$ are of order $p_1 + p_2 -1$.
\begin{proof}
Eq. \eqref{eq:motion_k} is obtained by repeatedly applying the differential operator $\partial_x (\alpha \cdot )$, which preserves the form of the higher order terms. Indeed, 
$$\partial_x \alpha \partial_x \alpha \partial_x \alpha \partial_x v_k =  \partial_x \alpha \partial_x \alpha \partial_x v_{k+1}\,,$$
where the parentheses are omitted. We apply the differential operator $\partial_x(\alpha \cdot )$ and find the equation satisfied by $v_1 = \alpha \partial_x v$. We have 
$$ \partial_x(\alpha \partial_t v ) = \partial_t v_1\,, \quad \partial_x(\alpha a \partial_x v) = a \partial_x v_1 + a' \alpha \left(\partial_x v\right)^2\,, $$ 
so that the equation satisfied by $v_1$ is 
$$ \partial_t v_1 + a\partial_x v_1 + \partial_x \alpha\partial_x \alpha \partial_x v_1 = - \frac{a'}{\alpha}v^2_1\,,$$
hence $f_1= 0$, $g_1 = 0$ and $h_1 =  - \frac{a'(v)}{\alpha(v)}v^2_1$.
For $k = 2$, we use the equation on $v_1$ and apply again the operator $\partial_x(\alpha \cdot )$. 
Then, we find 
$$\partial_t v_2 + a\partial_x v_2 + \partial_x \alpha\partial_x \alpha \partial_x v_2  = -\alpha'v_1 \partial^2_x v_2 +\left( \frac{\alpha'}{\alpha}v_2 - \frac{\alpha'^2}{\alpha^2}v^2_1 \right)\partial_x v_2 +  \left(\frac{\alpha' a}{\alpha} - \frac{a''}{\alpha}  \right)v^3_1 - \frac{3a'}{\alpha}v_1 v_2   \,,$$
hence $f_2 = \alpha' v_1$, $g_2 =  \frac{\alpha'}{\alpha}v_2 - \frac{\alpha'^2}{\alpha^2}v^2_1 $ and 
$$h_2 =  \left(\frac{\alpha' a'}{\alpha^2} - \frac{a''}{\alpha}  \right)v^3_1 - \frac{3a'}{\alpha}v_1 v_2 \,.$$ 
More generally, $f_k$, $g_k$, $h_k$ are computed by induction for $k \geq 2$. We have 
$$\partial_x(\alpha \partial_t v_k) = \partial_t v_{k+1} - \alpha' \left[  v_t  \partial_x v_k - v_x(v_k)_t\right]\,,$$ 
where we can use Eq. \eqref{eq:motion} and \eqref{eq:motion_k} to write 
$$\partial_t v_k =  - \frac{a}{\alpha} v_{k+1} - \alpha \partial^2_x v_{k+1} - \frac{\alpha'}{\alpha} v_1 \partial_x v_{k+1}+ \frac{f_k}{\alpha} \partial_x v_{k+1} - \frac{\alpha'}{\alpha^2} f_k v_1v_{k+1} + \frac{g_k}{\alpha} v_{k+1} + h_k\,,$$
and
$$v_t = -\frac{a}{\alpha}v_1 - \partial_x v_2 \,.$$
Using that
$$\partial_x(\alpha a \partial_x v_k) = a \partial_x v_{k+1} + a' \alpha \partial_x v \partial_x v_k = a \partial_x v_{k+1} + \frac{a'}{\alpha} v_1 v_{k+1}\,,$$
and	
\begin{equation*}
\begin{array}{rcl}
\partial_x \left(\alpha \left[ f_k \partial^2_x \vol_k + g_k \partial_x \vol_k + h_k \right] \right) &=& f_k \partial^2_x v_{k+1}\\
&+& \left(g_k + \partial_x f_k -  \frac{\alpha'}{\alpha^2} f_k v_1\right) \partial_x v_{k+1} \\
&+& \left( \partial_x g_k - \partial_x\left(\frac{\alpha'}{\alpha^2}f_k v_1\right)  \right)v_{k+1} + \partial_x(\alpha h_k)\,.
\end{array}
\end{equation*}
we find that 
$$\partial_t v_{k+1} + a \partial_x v_{k+1} + \partial_x \alpha \partial_x \alpha \partial_x v_{k+1} = f_{k+1} \partial^2_x v_{k+1} + g_{k+1} \partial_x v_{k+1} + h_{k+1} \,,$$
with 
\begin{equation}
\label{eq:induction}
\begin{array}{r c l}
f_{k+1} &=& f_k + \alpha' v_1 \\[1ex]
g_{k+1} &=& g_k + \partial_x f_k - \frac{2\alpha'}{\alpha^2} f_k v_1 + \frac{\alpha'^2}{\alpha^2} v^2_1\\[1ex]
h_{k+1} &=& \alpha \partial_x h_k+ \left( \partial_x g_k - \partial_x\left( \frac{\alpha'}{\alpha^2}f_kv_1\right)  \right)v_{k+1} \\[1ex]
 && \quad \quad \quad  \quad \quad \quad  \quad \quad \quad  - \frac{a'}{\alpha}v_1v_{k+1}+ \frac{\alpha' }{\alpha} v_{k+1}\left( \frac{\alpha'}{\alpha^2}f_kv^2_1 -\frac{g_k}{\alpha}v_1 - \partial_x v_2 \right) 
\end{array}
\end{equation}

Note that these induction relations are valid for $k \geq 2$. Indeed, the last term in \eqref{eq:induction} in the formula for $h_k$, namely $ -\frac{\alpha'}{\alpha} v_{k+1} \partial_x v_2$ involves three derivatives on $v$ at least. For $k = 1$, this term should be gathered with the ones which define $g_{k+1} = g_2$. This is the reason why we detailed the cases $k=1$ and $k=2$ in the beginning of the proof. 
From the first induction, we deduce that for all $k\geq 1$
\begin{equation}
\label{eq:f_k}
f_k = (k-1)\alpha' v_1= (k-1)\alpha\partial_x\alpha\,.
\end{equation}
This explicit expression will be useful in what follows and justifies that the coefficient $f_k$ depends only on $v$ and $v_1$ and is a polynomial in $v_1$ with weight one multiplied by a bounded function of $v$, namely $v \mapsto (k-1)\alpha'(v)$. We do not actually need the exact expression for $g_k$ and $h_k$. Instead, we analyse the number of derivatives they contain, that is their {\it weight}, and their general form in terms of the variables $v, v_1, \cdots, v_k$. 

For $k=2$, we have the explicit expression of $g_2$ which is of weight $2$. For $k \geq2$ if $g_k$ is of weight $2$ then so is $g_{k+1}$ because
$$g_{k+1} - g_k = \partial_x f_k - \frac{2\alpha'}{\alpha^2} f_k v_1 + \frac{\alpha'^2}{\alpha^2} v^2_1$$
is of weight $2$, since $f_k$ is of weight $1$. 

Finally, we shall prove by induction that $h_k$ has the general form 
\begin{equation}
\label{eq:general_form}
h_k(v,v_1, \cdots , v_k) =  \sum_i \beta_i(v) \prod_{j=1}^k \partial^{\gamma_j}_x v \,,
\end{equation}
where the $\beta_i$ involves derivatives of both functions $v \mapsto \alpha(v)$ and $ v \mapsto a(v)$ and for all $j$, $\gamma_j \leq k$ and the weight of each of the terms, namely  $\sum_j \gamma_j$ is either $k+1$ or $k+3$. Note that in the general form \eqref{eq:general_form}, the $\gamma_j$ are not necessarily distinct. For $k=2$ we have
$$h_2 =  \left(\frac{\alpha' a'}{\alpha^2} - \frac{a''}{\alpha}  \right)v^3_1 - \frac{3a'}{\alpha}v_1 v_2 \,,$$ 
which is of weight $3$. It is compatible with the general form \eqref{eq:general_form}.
For $k \geq 2$, we have from Eq. \eqref{eq:induction}
$$h_{k+1} = \alpha \partial_x h_k - \frac{a'}{\alpha}v_1v_{k+1} + r_k\,,$$
where $r_k$ gathers terms of weight $k+4$, as $g_k$ is of weight $2$ and $f_k$ is of weight $1$. The term $- \frac{a'}{\alpha}v_1v_{k+1}$ is of weight $k+2$. Now, from the form \eqref{eq:general_form} and using the chain rule and the product rule, we write 
$$\alpha \partial_x h_k = \alpha(v) \partial_x \left( \sum_i \beta_i(v) \prod_{j=1}^k \partial^{\gamma_j}_x v \right) = \sum_i \tilde \beta_i(v) \prod_{j=1}^k \partial^{\tilde \gamma_j}_x v \,,$$
where  the $\tilde \beta_i$ also involve derivatives of both functions $v \mapsto \alpha(v)$ and $ v \mapsto a(v)$ and for all $j$, $\tilde \gamma_j \leq k+1$ and the weight of each of the terms, namely $\sum_j \tilde \gamma_j$ is either $k+2$ or $k+4$. We see that the term $\alpha \partial_x h_k $ has the same general form as $h_k$, but with terms of weight $k+2$ or $k+4$. Then, if $h_k$ has the general form \eqref{eq:general_form}, so it is for $h_{k+1}$, with weight raised by one.  
\end{proof}

\begin{remark}
Another way to see that $h_k$ has the general form \eqref{eq:general_form} is to see that it is made of terms of the form 
$$\left((\alpha \circ v) \partial_x\right)^k (f \circ v) \,,$$
the function $f$ being a combination of the functions $\alpha$ or $a$ and their derivatives. Combining the Fa\`a di Bruno formula, which generalizes the chain rule, and the product rule, we see that the term $h_k$ consists of a polynomial expression on $v$ and its derivatives up to $v_k$, each term of the polynomial being multiplied by a bounded function of $v$.

\end{remark}

The structure of Eq. \eqref{eq:motion_k} is
$$ \partial_t v_k = \text{skew-symmetric terms} + \text{terms of order at most }2 \,.$$
In what follows, we will show how we can use {\it gauges} to reduce the order of these remainders. We shall start with the control of the zero order term. Then we will find an estimate for the first order term, without gauge fortunately. Finally, we will show how we can use a {\it suitable gauge} to control the second order term. 
\begin{remark}
We will have to prove a norm equivalence between the Sobolev norm $\| v \|_{H^k}$ and the weighted norm $\| \vol_k \|_{L^2}$. This property will be checked at the end of this section. 
\end{remark}

\begin{remark}
\label{rq:estimates}
To obtain Theorem~1 with $k=4$, one may think about a slightly different strategy. It is based on estimating not  $(\alpha\circ v \partial_x)^4v$ --- which we do by introducing some gauge --- but $(\alpha\circ v \partial_x)\partial_tv$. Since $\|(\alpha\circ v \partial_x)\partial_tv\|_{L^2}^2$ is the leading order part of $\langle \partial_tv, \delta^2\Ham[v]\partial_t v$, the subprincipal terms appearing in the latter computation may dealt with in a gaugeless way. See Remark \ref{rq:estimates2} for a continuation of this comparison.
\end{remark}

\subsection{Gauge estimates for subprincipal remainders}

Let us first focus on the zero order term $h_k$. 
We first prove the following lemma, which is an extension of Lemma 3.6.2 in \cite{Serre} to expression of order higher than one and adapted to our particular functions, that is polynomial functions with terms multiplied by bounded functions of $v$. 
\begin{lemma}
\label{lemma:hk}
Let $Q(\partial_x v, \cdots, \partial^k_x v)$ be an homogeneous polynomial of weight $q \in [k,2k)$. There exists a smooth function $C_{q,k}$ such that, for all $v \in H^k(\R)$, 
$$ \| Q(\partial_x v, \cdots, \partial^k_x v) \|_{L^2} \leq C_{q,k}( \|\partial_xv \|_{L^{\infty}}, \cdots, \| \partial^{(q-k)}_x v \|_{L^{\infty}}) \|v\|_{H^k} $$
\end{lemma}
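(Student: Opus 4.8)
The plan is to reduce to a single monomial and then combine Hölder's inequality with the homogeneous Gagliardo--Nirenberg interpolation inequalities, arranging the exponents so that $\|v\|_{H^k}$ appears exactly to the first power. By linearity it suffices to bound one monomial $M=\prod_{j=1}^m\partial_x^{\gamma_j}v$ with $1\le\gamma_j\le k$ and $\sum_j\gamma_j=q$, since $Q$ is a finite sum of such terms and the final constant is the (finite) sum of the monomial constants. First I would apply Hölder with exponents $p_1,\dots,p_m$ satisfying $\sum_j 1/p_j=1/2$, so that $\|M\|_{L^2}\le\prod_j\|\partial_x^{\gamma_j}v\|_{L^{p_j}}$. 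For each factor I then use Gagliardo--Nirenberg on $\R$, interpolating between the top norm $\|\partial_x^k v\|_{L^2}$ and a low-order norm $\|\partial_x^{b_j}v\|_{L^\infty}$ with $0\le b_j\le q-k$, in the pure (Sobolev-embedding) form
\begin{equation*}
\|\partial_x^{\gamma_j}v\|_{L^{p_j}}\lesssim\|v\|_{H^k}^{\theta_j}\,\|\partial_x^{b_j}v\|_{L^\infty}^{1-\theta_j},\qquad \theta_j=\frac{\gamma_j-b_j}{k-b_j},\quad \frac{1}{p_j}=\frac{\theta_j}{2}.
\end{equation*}
(Factors with $\gamma_j\le q-k$ are simply placed in $L^\infty$, i.e. $b_j=\gamma_j$, $\theta_j=0$.) Multiplying gives $\|M\|_{L^2}\lesssim\|v\|_{H^k}^{\sum_j\theta_j}\prod_j\|\partial_x^{b_j}v\|_{L^\infty}^{1-\theta_j}$, which is the desired linear bound provided $\sum_j\theta_j=1$; and with this endpoint choice $\sum_j 1/p_j=\tfrac12\sum_j\theta_j$, so Hölder closes in $L^2$ precisely when $\sum_j\theta_j=1$.

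The heart of the proof is thus to show that the bases $b_j$ can be chosen so that $\Theta(b):=\sum_j\frac{\gamma_j-b_j}{k-b_j}=1$. As each $b_j$ ranges over $[0,\min(\gamma_j,q-k)]$, the function $\Theta$ is continuous and decreasing in each $b_j$; its maximal value (all $b_j=0$) is $\sum_j\gamma_j/k=q/k\ge1$, while its minimal value is $\frac{1}{2k-q}\sum_{\gamma_j>q-k}(\gamma_j-(q-k))$. Hence it suffices to prove the counting inequality
\begin{equation*}
\sum_{j:\,\gamma_j>q-k}\big(\gamma_j-(q-k)\big)\le 2k-q,
\end{equation*}
which gives $\Theta_{\min}\le1\le\Theta_{\max}$ and, by the intermediate value theorem, a valid choice with $\Theta=1$. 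Writing $s=q-k$, $g=2k-q\ge1$ and $A=\{j:\gamma_j>s\}$, I would prove it by splitting on $|A|$: if $|A|\le1$ then $\sum_{j\in A}(\gamma_j-s)\le k-s=g$ since each $\gamma_j\le k$; if $|A|\ge2$ then $\sum_{j\in A}(\gamma_j-s)=\big(\sum_{j\in A}\gamma_j\big)-|A|s\le q-|A|s=k+s-|A|s\le k-s=g$, using $\sum_{j\in A}\gamma_j\le q$ and $(|A|-1)s\ge s$. This is exactly where the hypotheses $\gamma_j\le k$ and $q<2k$ enter: for $q=2k$ the interpolation room $g$ collapses and the statement fails, as the example $(\partial_x^k v)^2$ shows.

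The main obstacle is precisely this simultaneous control of the two budgets $\sum_j\theta_j=1$ and $\sum_j 1/p_j=1/2$, and the counting inequality is what makes it feasible. A few routine points remain. The optimal $b_j$ need not be integers, in which case I would either bound $\|\partial_x^{b_j}v\|_{L^\infty}$ by integer-order norms of order $\le q-k$ via a Landau--Kolmogorov interpolation inequality, or push one factor slightly off the Gagliardo--Nirenberg endpoint to absorb the discrepancy while maintaining $\sum_j 1/p_j=1/2$. The interpolation inequalities are first applied to Schwartz functions and extended to $v\in H^k(\R)$ by density, using $H^k(\R)\hookrightarrow W^{k-1,\infty}(\R)$ so that all norms of order $\le q-k\le k-1$ are finite. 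Since $b_j=0$ may be forced for certain monomials, the constant genuinely depends on $\|v\|_{L^\infty}$ as well, which is harmless here because solutions take values in the compact set $J$; and being a finite sum of products of powers of the $L^\infty$ norms, $C_{q,k}$ is a smooth function of its arguments.
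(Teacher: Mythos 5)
Your argument is sound and reaches the stated estimate, but it executes the Gagliardo--Nirenberg strategy differently from the paper. The paper also reduces to a monomial, but splits more crudely: factors with $\gamma_j\le q-k$ go in $L^\infty$, each factor with $q-k<\gamma_j<k$ is estimated in $L^\infty$ by interpolation between $\|\partial_x^{q-k}v\|_{L^\infty}$ and $\|\partial_x^k v\|_{L^2}$, one distinguished factor is estimated in $L^2$ by the same interpolation, and the resulting total exponent $\theta$ on $\|\partial_x^kv\|_{L^2}$ satisfies $\theta\le 1$ --- a claim the paper asserts without proof and which is precisely your counting inequality (shifted by the $1/2$'s) --- the leftover power $1-\theta$ being absorbed via $\|\partial_x^{q-k}v\|_{L^\infty}\lesssim \|v\|_{H^k}$. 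You instead put each factor in an intermediate $L^{p_j}$ and tune the bases $b_j$ by an intermediate-value argument so that the $H^k$-exponent is exactly $1$ and the H\"older budget closes in $L^2$. Your route buys an explicit proof of the key counting inequality and handles uniformly the case where \emph{all} $\gamma_j\le q-k$ (e.g. $(\partial_x^2v)^3$ with $k=4$), which the paper's case analysis as written skips (there one should simply place one such factor in $L^2$). It costs two things you partly acknowledge: (i) the optimal $b_j$ need not be integers, so you must either invoke fractional-order interpolation or, more simply, use the convexity observation that for each factor the admissible pairs $(\theta_j,1/p_j)$ obtained from integer bases and interpolation of Lebesgue norms form a segment, after which the same intermediate-value argument applies --- this fix does work but should be carried out, since it is where your argument is least complete; and (ii) for monomials such as $(\partial_x v)^{k+1}$ your scheme forces some $b_j=0$, so your constant also depends on $\|v\|_{L^\infty}$, which is not among the arguments allowed in the statement; this is harmless for the paper's application (the lemma is used there with constants depending on $\|v\|_{W^{3,\infty}}$) and could be removed by allowing the trivial bound $\|\partial_x^{\gamma_j}v\|_{L^2}\le\|v\|_{H^k}$ on one low-order factor, but as written it is a (duly flagged) deviation from the letter of the lemma.
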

\begin{proof}
The proof is based on the Gagliardo-Nirenberg inequality. First, note that if $v \in H^k(\R)$ and $q< 2k$, then for all $p \leq q- k < k$, $\partial^p_x v \in H^1(\R) \hookrightarrow L^{\infty}(\R)$.  
By the triangular inequality, it is sufficient to focus on a monomial expression of the form 
$$ Q(\partial_x v, \cdots, \partial^k_x v)  = \prod_{j=1}^k \partial^{\gamma_j}_x v \,,$$
with for all $1 \leq j \leq k$, $\gamma_j  \leq k$ satisfy $ \sum_{j=1}^k \gamma_j = q$.
If there are some $j$ such that $\gamma_j \leq q - k$, then we can estimate the corresponding factors in $L^{\infty}(\R)$. Now, if all the remaining factors satisfy $q - k < \gamma_j < k$, we can conclude by using the Gagliardo-Nirenberg inequality. We choose a particular $l$ and write, for $j \neq l$
$$\| \partial^{\gamma_j}_x v \|_{L^{\infty}}  \lesssim \|\partial^k_x v \|^{\theta_j}_{L^2} \| \partial^{(q-k)}_x v\|^{1-\theta_j}_{L^{\infty}} \ , \textrm{where} \ \theta_j = \frac{\gamma_j - (q-k)}{k - (q-k) -1/2} \ ,$$
and for $j = l$
$$\| \partial^{\gamma_{l}}_x v \|_{L^{2}}  \lesssim \|\partial^k_x v \|^{\theta_{l}}_{L^2} \| \partial^{(q-k)}_x v\|^{1-\theta_{l}}_{L^{\infty}} \ , \textrm{where} \  \theta_{l} = \frac{\gamma_{l} - (q-k)-1/2}{k - (q-k) -1/2} \ .$$
With these relations, we have 
$$\theta = \sum_{\substack{j=1\\ q-k < \gamma_j \leq k}}^k \theta_j  \leq 1 \ ,$$ 
and then we are able to write 
$$ \| Q(\partial_x v, \cdots, \partial^k_x v) \|_{L^2} \leq C_{q,k}( \|\partial_x v \|_{L^{\infty}}, \cdots, \| \partial^{(q-k)}_x v \|_{L^{\infty}})\|\partial^k_x v \|^{\theta}_{L^2} \| \partial^{(q-k)}_x v\|^{1-\theta}_{L^{\infty}} \,.$$
Finally, if there exists $j_0$ such that $\gamma_{j_0} = k$, the corresponding factor belongs to $L^2(\R)$ and all other $j$ satisfies $\gamma_j \leq q-k$. Then, for all $j \neq j_0$, we have $\partial^{\gamma_j}_x v \in L^{\infty}(\R)$.
Combining all previous cases, and using the Sobolev embedding $H^1(\R) \hookrightarrow L^{\infty}(\R)$, we can write 
$$ \| Q(\partial_x v, \cdots, \partial^k_x v) \|_{L^2} \leq C_{q,k}( \|\partial_x v \|_{L^{\infty}}, \cdots, \| \partial^{(q-k)}_x v \|_{L^{\infty}})\| v \|_{H^k} \,.$$ 
\end{proof}

Recall that $h_k$ contains two terms of weight $q = k+1$ and $q = k+3$. We deduced in Proposition \ref{prop:structure} the general form 
$$h_k(v,v_1, \cdots , v_k) =  \sum_i \beta_i(v) \prod_{j=1}^k \partial^{\gamma_j}_x v \,,$$
where the $\beta_i$ are bounded functions of $v$ on $J$. Then, Lemma \ref{lemma:hk} gives us the following estimate 
$$ \| h_k(v,v_1, \cdots , v_k) \|_{L^2} \leq C_{k}( \|v \|_{L^{\infty}}, \cdots, \| \partial^{3}_x v \|_{L^{\infty}})\| v \|_{H^k} \,.$$

Let us now focus on the first order terms $g_k\partial_x v_k$. This term brings no trouble at all as we can regroup it with the first order term in Eq. \eqref{eq:motion_k}
$$ \left[a(v) - g_k(v,v_1,v_2) \right]\partial_x v_k \ .$$
When taking the inner product with $v_k$, we can estimate 
\begin{equation*}
\begin{array} {r l}
\left< \left(a - g_k\right) \partial_x v_k | v_k \right>  \leq &\| \partial_x \left( a - g_k \right) \|_{L^{\infty}}  \|v_k \|^2_{L^2} \\[1ex]
 \leq & C_k(\|v\|_{L^{\infty}},\|v_1\|_{L^{\infty}},\|v_2\|_{L^{\infty}}, \|v_3\|_{L^{\infty}})\| v_k \|^2_{L^2} 
\end{array}
\end{equation*}

This leaves us with the only one remaining term $f_k\partial^2_x v_k$. This one cannot be estimated as the previous ones because it contains too many derivatives. The method we present here consists in using \textit{gauges} in the equation, following ideas from \cite{limponce02}.  

Formally and in all generality, what we call a gauge is a general differential operator with unknown variable coefficients that cancels out `bad' commutator terms when applied to the equation. The key property is that the commutator of two differential operators of order respectively $p_1$ and $p_2$ is of order $p_1 + p_2 -1$. In our case, the equation is of leading order three, with a priori two subprincipal terms we wish to reduce to order zero. We could define a gauge of the type 
$$ \phi = \text{zero$^{th}$ term} + \text{order $(-1)$ term} $$ 
and apply it to our equation. Doing so, the two commutators with the leading order term would be of second and first order. These two new terms can be gathered with the existing ones, and, in practice, one can choose the coefficients of the gauge as solutions of ODEs, to cancel (or at least control) the subprincipal terms. 

In our situation, we have already shown that the first order term can be controlled without this technique. Using this fact, we will define a particular gauge as a function  
\begin{equation}
\label{eq:phi}
\phi_k : (v, \cdots, v_k) \mapsto \phi_k(v, \cdots , v_k)\,,
\end{equation}
to cancel the second order term $f_k\partial^2_x v_k$. As we will see in the computation though, we will have to check that the arising first order term can be bounded.

Now, we multiply Eq. \eqref{eq:motion_k} by $\phi^2_k$
$$\phi_k \partial_t( \phi_k \vol_k) + \phi_k a  \partial_x (\phi_k \vol_k) + \partial_x \phi_k \alpha \partial_x \phi_k \alpha \partial_x \vol_k + R_k = \phi_k \phi_k f_k \partial^2_x \vol_k + \phi_k \phi_k g_k \partial_x \vol_k + \phi_k \phi_k h_k \ , $$
where the four remaining terms are gathered in  
$$ R_k =  \phi_k a \left[ \phi_k, \partial_x \right] v_k +  \phi_k \left[ \phi_k, \partial_t \right] v_k + \left[ \phi_k \phi_k , \partial_x(\alpha \cdot) \right]\partial_x \alpha \partial_x v_k + \partial_x \alpha \phi_k \left[ \phi_k,  \partial_x \right]\alpha \partial_x v_k \ .$$ 
We expect the first two terms to be bounded in $L^2(\R)$ because they are of order zero. This will be checked when we find bounds on the function $\phi_k$. First, we compute the commutators
\begin{equation}
\label{eq:commute}
\begin{array}{l}
\left[ \phi^2_k, \partial_x(\alpha \cdot) \right] =   - 2\alpha \phi_k \partial_x\phi_k \,, \\
\left[ \phi_k,  \partial_x \right ] =   - \partial_x \phi_k\,, \\
\left[ \phi_k,  \partial_t \right ] =   - \partial_t \phi_k\,. 
\end{array}
\end{equation}
With these relations we are able to compute the last two terms in $R_k$ 
\begin{equation*}
\begin{array}{l}
 \left[ \phi^*_k \phi_k , \partial_x(\alpha \cdot) \right]\partial_x \alpha \partial_x v_k = -  2 \alpha^2 \phi_k(\partial_x\phi_k )\partial^2_x v_k - 2\alpha (\partial_x \alpha) \phi_k( \partial_x\phi_k )\partial_x vk \ , \\
\partial_x \alpha \phi^*_k \left[ \phi_k,  \partial_x \right]\alpha \partial_x v_k =  - \alpha^2 \phi_k (\partial_x \phi_k) \partial^2_x v_k - \partial_x \left( \alpha^2 \phi_k (\partial_x \phi_k)\right) \partial_x v_k \ .
\end{array}
\end{equation*}
As expected, these terms coming from the commutators are also subprincipal terms. We gather the coefficients of the second order terms and find an ODE on the function $\phi_k$.
\begin{equation}
\label{eq:edos}
3 \alpha^2 (\partial_x\phi_k)  + \phi_k f_k = 0\,.
\end{equation}
\begin{remark}
\label{rq:estimates2}
To proceed with the comment of Remark \ref{rq:estimates}, we observe that $\|\phi_4\circ v (\alpha\circ v \partial_x)^4v\|_{L^2}=\|\partial_x(\alpha\circ v \partial_x)^3v\|_{L^2}$ which differs from $\|(\alpha\circ v \partial_x)\partial_tv\|_{L^2}$ --- that is essentially $\|\alpha\circ v \partial_x^2(\alpha\circ v \partial_x)^2v\|_{L^2}$ --- only in some immaterial way. The advantage of using the gauge strategy to determine a correct functional --- instead of deducing it directly from the Hamiltonian structure --- is that it naturally generalizes to differentiation by any number of derivative, as we have just shown.
\end{remark}
We recall that this process added some commutator terms of first order terms we have to control a posteriori. If we can prove the existence of $\phi_k$ satisfying the previous ODE and belonging to a suitable space, here say $W^{3,\infty}(\R)$ for example, then we can get our a priori energy estimate by the same argument on first order terms as the one presented above. 
\begin{remark}
We also need to prove a norm equivalence between the usual $L^2$ norm and some $L^2$ norm involving $\phi_k$. This will be done at the end of this section. 
\end{remark}
From Eq. \ref{eq:f_k}, we come back to the ODE \eqref{eq:edos} and easily find
\begin{equation}
\phi_k(v) = \alpha(v)^{- \frac{k-1}{3}} \,.
\end{equation} 
Now, with the regularity properties of the function $v \mapsto \alpha(v)$, we directly get that $\phi_k$ is bounded from above and away from zero. Moreover, for all $0 \leq l \leq 3$, there exists a constant $C_k$ such that 
$$ \| \partial^l_x \phi_k(v) \|_{L^{\infty}} \leq C_k( \| v \|_{W^{3,\infty}})\,,$$
and 
$$ \| \partial_t \phi_k(v) \|_{L^{\infty}} \leq C_k(\|v\|_{W^{3,\infty}})\,.$$ 
Coming back to the first order terms, we check that 
$$ \|\partial_x \left[ a(v) - g_k - 2\alpha (\partial_x \alpha) \phi_k( \partial_x\phi_k ) - \partial_x \left( \alpha^2 \phi_k (\partial_x \phi_k)\right)\right] \|_{L^{\infty}} \leq C_k(\|v\|_{W^{3,\infty}}) \,.$$

Let us sum up {\it formally} what we obtained until now. At order $k > 0$, $v_k$ satisfies Eq. \eqref{eq:motion_k}. We define a gauge 
$$\phi_k(v) = \alpha(v)^{-(k-1)/3}\,,$$
and multiply Eq. \eqref{eq:motion_k} by $\phi^2_k$. This operation yields the equation satisfied by the quantity $\phi_k v_k$ 
$$\phi_k \partial_t( \phi_k \vol_k) + \partial_x \phi_k \alpha \partial_x \phi_k \alpha \partial_x \vol_k = \phi^2_k \widetilde R_k(v) \,,$$
where the function $\widetilde R_k$ gathers all the subprincipal terms we are now able to estimate 
$$ \| \phi_k \widetilde R_k(v) \|_{L^2} \leq C_k(\|v\|_{W^{3,\infty}}) \| \phi_k v_k \|_{L^2} \,.$$
This is a formal computation and we do not really get this ideal last estimate. In practice, it is not the norm $\| \phi_kv_k \|_{L^2}$ that appears but a combination of it and some norms $\|v\|_{H^k}$ with or without the gauge $\phi_k$. We need to prove norm equivalences between the weighted norms we introduced up to this point.

\subsection{Weighted norms equivalences}
As mentioned before, the final energy estimate cannot be obtained if we do not have some norm equivalence on the quantities we are working with. More precisely, we prove the following lemma
\begin{lemma} 
\label{lemma:norm}
Consider an integer $k \geq 1$, let $J$ be a compact subset of $I \subset \R$. Let $\phi_k = \alpha^{-(k-1)/3}$ with $\alpha : I \to \R^{+*}$ of class $\sC^{k+2}$. On the one hand, there exists a constant $c_k$ depending only $J$ such that, for all function $v \in H^k(\R)$ satisfying $v(t,x) \in J$  for all $(t,x) \in \R^+ \times \R$, then 
\begin{equation}
\label{eq:ineq1}
\frac{1}{c_k} \|v \|_{L^2} \leq \|\phi_k(v) v \|_{L^2} \leq c_k \| v \|_{L^2} \ .
\end{equation}
On the other hand, if we denote $v_k = (\alpha(v)\partial_x)^k v$, there exist constants $c'_k$ and $C_{k-1}$ depending only on a constant $\rho > 0$ and $J$, such that for all function $v \in H^k(\R)$ with $v(t,x) \in J$ for all $(t,x) \in \R^+ \times \R$ and $\|v\|_{W^{1,\infty}} \leq \rho$, then
\begin{equation} 
\label{eq:ineq2}
\frac{1}{c'_{k}} \|v\|^2_{H^k} \leq C_{k-1}\| v\|^2_{H^{k-1}} + \| \phi_k v_k \|^2_{L^2} \leq c'_{k} \| v \|^2_{H^k} \ .
\end{equation}
\end{lemma}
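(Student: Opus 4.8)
The plan is to treat the two inequalities separately, both resting on one elementary observation: since $v(t,x)$ ranges in the compact set $J\subset\subset I$ and $\alpha:I\to\R^{+*}$ is continuous and strictly positive, the quantity $\alpha(v)$, and hence any fixed real power of it such as $\phi_k(v)=\alpha(v)^{-(k-1)/3}$, is bounded above and away from zero by constants depending only on $J$ (and $k$).

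For \eqref{eq:ineq1} I would use this pointwise control directly. Writing $\phi_{\min}=\min_J\phi_k$ and $\phi_{\max}=\max_J\phi_k$, both positive, one has $\phi_{\min}|v(x)|\leq|\phi_k(v(x))\,v(x)|\leq\phi_{\max}|v(x)|$ for a.e. $x$; squaring, integrating over $\R$ and taking square roots gives \eqref{eq:ineq1} with $c_k=\max(\phi_{\max},\phi_{\min}^{-1})$, a constant depending only on $J$. The case $k=1$ is trivial since then $\phi_1\equiv 1$.

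For \eqref{eq:ineq2} the starting point is the decomposition
$$ v_k=(\alpha(v)\partial_x)^k v=\alpha(v)^k\partial_x^k v+P_{k-1}\,, $$
where $P_{k-1}$ is a sum of monomials $\beta(v)\prod_j\partial_x^{\gamma_j}v$ of total weight $k$ in which every factor carries at most $k-1$ derivatives (that is $\gamma_j\leq k-1$, with at least two nontrivial factors). This is proved by a short induction on $k$: differentiating $\alpha\,\partial_x(\alpha^k\partial_x^k v+P_{k-1})$, the top term reproduces $\alpha^{k+1}\partial_x^{k+1}v$, while the Leibniz remainders $k\alpha^k\alpha'\,\partial_x v\,\partial_x^k v$ and $\alpha\,\partial_x P_{k-1}$ have weight $k+1$ with all factors of order at most $k$. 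The key arithmetic is that $\phi_k(v)\,\alpha(v)^k=\alpha(v)^{(2k+1)/3}$ is again bounded above and away from zero on $J$, so that $\|\phi_k\alpha^k\partial_x^k v\|_{L^2}$ is equivalent to $\|\partial_x^k v\|_{L^2}$. It then remains to absorb the remainder: using that $\beta$ and $\phi_k$ are bounded on $J$, one has $\|\phi_k P_{k-1}\|_{L^2}\lesssim\|P_{k-1}\|_{L^2}$, and since each monomial of $P_{k-1}$ is homogeneous of weight $q=k\in[k-1,2(k-1))$ with every factor of order $\leq k-1$, Lemma~\ref{lemma:hk} applied with the parameter $k-1$ yields
$$ \|\phi_k P_{k-1}\|_{L^2}\leq C_k\big(\|\partial_x v\|_{L^\infty}\big)\,\|v\|_{H^{k-1}}\leq C_k(\rho)\,\|v\|_{H^{k-1}}\,, $$
only $\|\partial_x v\|_{L^\infty}\leq\rho$ entering because $q-(k-1)=1$. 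Writing $\phi_k\alpha^k\partial_x^k v=\phi_k v_k-\phi_k P_{k-1}$ and combining the triangle inequality with $(a+b)^2\leq 2a^2+2b^2$, I would deduce both $\|\partial_x^k v\|_{L^2}^2\lesssim\|\phi_k v_k\|_{L^2}^2+C_k(\rho)^2\|v\|_{H^{k-1}}^2$ and $\|\phi_k v_k\|_{L^2}^2\lesssim\|\partial_x^k v\|_{L^2}^2+\|v\|_{H^{k-1}}^2$; feeding these into the equivalence $\|v\|_{H^k}^2\simeq\|v\|_{H^{k-1}}^2+\|\partial_x^k v\|_{L^2}^2$ and choosing $C_{k-1}$ and $c'_k$ accordingly produces both sides of \eqref{eq:ineq2}.

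The main obstacle is precisely this remainder estimate. The weight of $P_{k-1}$ is exactly $k$, sitting at the lower endpoint of the admissible range $[k-1,2(k-1))$ for Lemma~\ref{lemma:hk} with parameter $k-1$, and the upper endpoint constraint $k<2(k-1)$ forces $k\geq 3$ for the clean application; the borderline low orders $k=1$ (where $P_0=0$) and $k=2$ (where $P_1=\alpha\alpha'(\partial_x v)^2$, so that $\|(\partial_x v)^2\|_{L^2}\leq\|\partial_x v\|_{L^\infty}\|\partial_x v\|_{L^2}\leq\rho\|v\|_{H^1}$) must be verified by hand. It is exactly here that the a priori bound $\|v\|_{W^{1,\infty}}\leq\rho$ is indispensable: without control of $\|\partial_x v\|_{L^\infty}$ the worst product $\partial_x^{k-1}v\cdot\partial_x v$ could not be closed in $\|v\|_{H^{k-1}}$, which is why the constants $c'_k$ and $C_{k-1}$ are allowed to depend on $\rho$.
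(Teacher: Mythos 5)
Your proof is correct and follows essentially the same route as the paper: pointwise bounds on powers of $\alpha(v)$ from compactness of $J$ for \eqref{eq:ineq1}, then the decomposition $v_k=\alpha(v)^k\partial_x^k v+(\text{weight-}k\text{ remainder with all factors of order}\leq k-1)$ estimated via Lemma~\ref{lemma:hk} at index $k-1$, and a triangle-inequality rearrangement for \eqref{eq:ineq2}. Your explicit treatment of the borderline cases $k=1,2$ (where the range condition of Lemma~\ref{lemma:hk} is not met) is a welcome precision that the paper's proof leaves implicit.
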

\begin{proof}
The first inequalities are a direct consequence of the fact that the function $\alpha$ is bounded from above and below as $J$ is compact.
Regarding the second norm equivalence, we use the same scheme of proof as in Lemma \ref{lemma:hk} with a weight $q = k$. 
Using the definition of $v_k$, we can write 
$$ v_k - \alpha^k(v)\partial^k_x v= \sum_i \beta_i(v) \prod_{j=1}^{k} \partial^{\gamma_j}_x v\,,$$
where the $\beta_i$ are bounded functions on $J$, $\sum^k_{j=1} \gamma_j = k$ and for all $j$, $\gamma_j \leq k-1$.
The leading order term in $v_k$ satisfies 
$$ \frac{1}{c_k} \|\partial^k_x v\|_{L^2} \leq \|\alpha^k(v)\partial^k_x v  \|_{L^2} \leq c_k\| \partial^k_x v \|_{L^2} \ .$$
By Lemma \ref{lemma:hk}, the remaining terms are bounded by 
$$\left\| v_k - \alpha^k\partial^k_x v \right\|^2_{L^2} \le2q C_{k-1}(\|v\|_{W^{1,\infty}})\| v \|^2_{H^{k-1}} \ .$$
Then, we get
$$ \frac{1}{c'_k} \|\partial^k_x v\|^2_{L^2}  - C_{k-1}\| v \|^2_{H^{k-1}}\leq \| v_k \|^2_{L^2} \leq c'_k\| \partial^k_x v \|^2_{L^2} + C_{k-1}\| v \|^2_{H^{k-1}} \,,$$
rearranged as 
$$ \frac{1}{c'_{k}} \| \partial^k_xv \|^2_{L^2} \leq C_{k-1}\| v\|^2_{H^{k-1}} + \| v_k \|^2_{L^2} \leq c'_{k} \| v \|^2_{H^k} \ .$$
Using the inequality \eqref{eq:ineq1} and without renaming the constants already written for convenience, we finally obtain
$$ \frac{1}{c'_{k}} \| v \|^2_{H^k} \leq C_{k-1}\| v\|^2_{H^{k-1}} + \| \phi_k v_k \|^2_{L^2} \leq c'_{k} \| v \|^2_{H^k} \ .$$
\end{proof}

Then for all $v \in H^k(\R)$, we consider the weighted norm $|\cdot |_k$ defined recursively by 
\begin{equation} 
\label{eq:norm_weight}
\left\{
\begin{array}{l}
\left| v \right|^2_k = \| \phi_k v_k\|^2_{L^2} + C'_{k-1}|v|^2_{k-1}  \,, \text{ for } k \geq 1 \,,\\[1ex]
\left| v \right|_0 = \|v\|_{L^2}\,.
\end{array}
\right.
\end{equation}
where the constant $C'_{k-1}$ is determined by induction. This definition leads to the following proposition. 
\begin{proposition}
\label{prop:equiv}
For all integer $s \geq 1$, the weighted norm $\left| \cdot \right|_s $ defined by \eqref{eq:norm_weight} is equivalent to the $H^s$ norm. More precisely,  there exists a constant $c_s$ depending only on $\rho > 0$ and $J$ such that for all $v \in H^s(\R)$ with, for all $(t,x) \in \R^+ \times \R$, $v(t,x) \in J$ and $\|v\|_{W^{1,\infty}} \leq \rho$, then
$$ \frac{1}{c_{s}} \| v \|^2_{H^s}\leq \left| v \right|^2_s \leq c_{s}\| v \|^2_{H^s} \,.$$
\end{proposition}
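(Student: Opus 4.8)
The plan is to argue by induction on $s$, using the single-step equivalence \eqref{eq:ineq2} of Lemma \ref{lemma:norm} at each level together with a judicious choice of the constants $C'_{k-1}$ appearing in the recursive definition \eqref{eq:norm_weight}. For the base case $s=1$, I would note that $\phi_1 = \alpha^0 = 1$ and that \eqref{eq:ineq2} with $k=1$ reads exactly
$$\frac{1}{c'_1}\|v\|^2_{H^1} \leq C_0\|v\|^2_{L^2} + \|\phi_1 v_1\|^2_{L^2} \leq c'_1\|v\|^2_{H^1}\,.$$
Choosing $C'_0 = C_0$ makes the middle term coincide with $|v|^2_1 = \|\phi_1 v_1\|^2_{L^2} + C'_0 |v|^2_0$, so the equivalence holds with $c_1 = c'_1$.

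For the inductive step, suppose the claim holds at order $s-1$ with constant $c_{s-1}$, so that $\frac{1}{c_{s-1}}\|v\|^2_{H^{s-1}} \leq |v|^2_{s-1} \leq c_{s-1}\|v\|^2_{H^{s-1}}$. The upper bound for $|v|^2_s$ is immediate: by the right inequality in \eqref{eq:ineq2} at level $s$, the inductive hypothesis, and $\|v\|_{H^{s-1}} \leq \|v\|_{H^s}$, one has
$$|v|^2_s = \|\phi_s v_s\|^2_{L^2} + C'_{s-1}|v|^2_{s-1} \leq c'_s\|v\|^2_{H^s} + C'_{s-1}c_{s-1}\|v\|^2_{H^s}\,.$$
The delicate point is the lower bound. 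From the left inequality in \eqref{eq:ineq2} I only get $\|\phi_s v_s\|^2_{L^2} \geq \frac{1}{c'_s}\|v\|^2_{H^s} - C_{s-1}\|v\|^2_{H^{s-1}}$, which carries a negative lower-order term. Adding $C'_{s-1}|v|^2_{s-1} \geq \frac{C'_{s-1}}{c_{s-1}}\|v\|^2_{H^{s-1}}$ yields
$$|v|^2_s \geq \frac{1}{c'_s}\|v\|^2_{H^s} + \left(\frac{C'_{s-1}}{c_{s-1}} - C_{s-1}\right)\|v\|^2_{H^{s-1}}\,.$$

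The main obstacle — and precisely the reason \eqref{eq:norm_weight} leaves $C'_{k-1}$ to be \emph{determined by induction} — is to neutralize this negative term. I would therefore choose $C'_{s-1} \geq c_{s-1}C_{s-1}$, so that the coefficient of $\|v\|^2_{H^{s-1}}$ is nonnegative and the clean lower bound $|v|^2_s \geq \frac{1}{c'_s}\|v\|^2_{H^s}$ follows. Combining the two bounds gives the asserted equivalence with, for instance, $c_s = c'_s + C'_{s-1}c_{s-1}$. All constants depend only on $\rho$ and $J$ through the quantities $c'_k$ and $C_{k-1}$ furnished by Lemma \ref{lemma:norm} and through the previously fixed $c_{s-1}$, so this dependence propagates correctly along the induction. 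Beyond this bookkeeping of constants, no genuine analytic difficulty remains.
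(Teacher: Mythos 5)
Your proposal is correct and follows essentially the same route as the paper: induction on $s$, applying the two-sided inequality \eqref{eq:ineq2} of Lemma \ref{lemma:norm} at each level and choosing $C'_{s-1}$ large enough (your condition $C'_{s-1}\geq c_{s-1}C_{s-1}$ is the paper's requirement $C'_{s-1}/c'_{s-1}-C_{s-1}>0$) to absorb the negative lower-order term $-C_{s-1}\|v\|^2_{H^{s-1}}$. Your explicit bookkeeping of the base case and of the final constant $c_s$ is merely a more detailed rendering of the same argument.
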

\begin{proof}
The proof is done by induction. For $s=1$, the result is given by the relation \eqref{eq:ineq2} from Lemma \ref{lemma:norm}.
For any $s>1$, we have from \eqref{eq:ineq2} and the definition of $\left| \cdot \right|_s$
$$\frac{1}{c'_{s}} \|v\|^2_{H^s} \leq C_{s-1}\| v\|^2_{H^{s-1}} - C'_{s-1}\left| v \right|^2_{s-1}+ \left| v \right|^2_s \leq c'_{s} \| v \|^2_{H^s} \ .$$
Then, we get using the induction property \eqref{eq:norm_weight}
$$\frac{1}{c'_{s}} \|v\|^2_{H^s} + \left( \frac{C'_{s-1}}{c'_{s-1}}- C_{s-1}\right)\| v\|^2_{H^{s-1}}\leq  \left| v \right|^2_s \leq c'_{s} \| v \|^2_{H^s}+  \left( C'_{s-1}c'_{s-1}- C_{s-1}\right)\| v\|^2_{H^{s-1}}\ .$$
Finally, choosing the constant $C'_{s-1}$ such that $ \left( C'_{s-1}/c'_{s-1}- C_{s-1}\right) > 0$, we obtain with new constants  
$$ \frac{1}{c_{s}} \| v \|^2_{H^s}\leq \left| v \right|^2_s \leq c_{s}\| v \|^2_{H^s} \,.$$
\end{proof}

We are now able to give an \textit{a priori} bound on a smooth solution of (qKdV).
\begin{proposition}
\label{prop:apriori}
For any integer $s\geq 4$, a smooth solution $v$ of (qKdV) associated with the initial condition $v_0 \in H^s(\R)$ satisfies
\begin{equation}
\label{eq:apriori}
\|v\|_{H^s} \leq C_s( \|v\|_{W^{3,\infty}}) \|v_0\|_{H^s}
\end{equation}
\end{proposition}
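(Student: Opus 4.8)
The plan is to differentiate in time the weighted norm $|v|_s^2$ of \eqref{eq:norm_weight}, to show that $\frac{d}{dt}|v|_s^2\le C_s(\|v\|_{W^{3,\infty}})|v|_s^2$, and to conclude by Gronwall's lemma together with the norm equivalence $|v|_s^2\simeq\|v\|_{H^s}^2$ of Proposition \ref{prop:equiv}. Unrolling the recursion \eqref{eq:norm_weight}, $|v|_s^2$ is a finite combination, with fixed positive coefficients, of $\|v\|_{L^2}^2$ and the gauged quantities $\|\phi_k v_k\|_{L^2}^2$ for $1\le k\le s$, so it is enough to bound $\frac{d}{dt}\|v\|_{L^2}^2$ and each $\frac{d}{dt}\|\phi_k v_k\|_{L^2}^2$.

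For the base case I would pair \eqref{eq:motion} with $v$: the third order term is skew-symmetric and drops after one integration by parts, while $\langle a\partial_x v\,|\,v\rangle=-\tfrac12\langle(\partial_x a)v\,|\,v\rangle$, so that $\frac{d}{dt}\|v\|_{L^2}^2\le C(\|v\|_{W^{1,\infty}})\|v\|_{L^2}^2$. For $k\ge1$ the decisive step is to test the gauged equation
$$\phi_k\partial_t(\phi_k v_k)+\partial_x\phi_k\alpha\partial_x\phi_k\alpha\partial_x v_k=\phi_k^2\widetilde R_k(v)$$
against $v_k$ (and not against $\phi_k v_k$). The time term then yields exactly $\tfrac12\frac{d}{dt}\|\phi_k v_k\|_{L^2}^2$, since $\langle\phi_k\partial_t(\phi_k v_k)\,|\,v_k\rangle=\langle\partial_t(\phi_k v_k)\,|\,\phi_k v_k\rangle$; and, setting $\psi=\phi_k\alpha\partial_x v_k$, one integration by parts together with $\partial_x v_k=\psi/(\phi_k\alpha)$ gives $\langle\partial_x(\phi_k\alpha\partial_x\psi)\,|\,v_k\rangle=-\langle\partial_x\psi\,|\,\psi\rangle=0$. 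This is precisely the cancellation that the choice of gauge in \eqref{eq:edos} was designed to preserve.

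It then remains to bound $\langle\phi_k^2\widetilde R_k\,|\,v_k\rangle$, splitting $\widetilde R_k$ into its first order and zero order parts. The first order contribution is of the form $\langle B_k\partial_x v_k\,|\,v_k\rangle=-\tfrac12\langle(\partial_x B_k)v_k\,|\,v_k\rangle$ after one integration by parts, and its coefficient was already checked to satisfy $\|\partial_x B_k\|_{L^\infty}\le C_k(\|v\|_{W^{3,\infty}})$; hence this term is $\le C_k(\|v\|_{W^{3,\infty}})\|v_k\|_{L^2}^2$. The zero order contribution, which collects $h_k$ and the order-zero commutator terms, is estimated in $L^2$ by $C_k(\|v\|_{W^{3,\infty}})\|v\|_{H^k}$ thanks to Lemma \ref{lemma:hk}, so that $\langle\phi_k^2\cdot(\text{order }0)\,|\,v_k\rangle\le C_k(\|v\|_{W^{3,\infty}})\|v\|_{H^k}\|v_k\|_{L^2}$. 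Since $\phi_k$ is bounded above and away from zero and $\|v_k\|_{L^2}\lesssim\|v\|_{H^k}\le\|v\|_{H^s}$, all contributions are controlled by $C_k(\|v\|_{W^{3,\infty}})\|v\|_{H^s}^2$, giving $\frac{d}{dt}\|\phi_k v_k\|_{L^2}^2\le C_k(\|v\|_{W^{3,\infty}})\|v\|_{H^s}^2$.

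Summing over $0\le k\le s$ with the coefficients fixed in \eqref{eq:norm_weight} produces $\frac{d}{dt}|v|_s^2\le C_s(\|v\|_{W^{3,\infty}})\|v\|_{H^s}^2$, and Proposition \ref{prop:equiv} converts the right-hand side into $C_s(\|v\|_{W^{3,\infty}})|v|_s^2$. Gronwall's lemma then gives $|v(t)|_s^2\le|v_0|_s^2\exp\!\big(\int_0^t C_s(\|v(\tau)\|_{W^{3,\infty}})\,d\tau\big)$, and a final use of the norm equivalence on both ends yields \eqref{eq:apriori}, the dependence on the length of the time interval and on $\sup_{[0,t]}\|v\|_{W^{3,\infty}}$ being absorbed into the constant $C_s$. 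The step demanding genuine care is the order-zero estimate of $\widetilde R_k$: one must ensure that the top-order factor enters only linearly and that no factor of order exceeding $s$ survives, i.e. that there is no loss of derivatives. This is guaranteed by the weight bookkeeping of Proposition \ref{prop:structure} (all terms of $h_k$ have weight at most $k+3$) together with the Gagliardo--Nirenberg estimate of Lemma \ref{lemma:hk}, and it is here that the hypothesis $s\ge4$, ensuring $\partial_x^3 v\in L^\infty$, is used.
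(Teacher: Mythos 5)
Your proposal is correct and follows essentially the same route as the paper: test the gauged equation for $\phi_k v_k$ against $v_k$ so that the skew-symmetric leading term cancels, control the remainders $\widetilde R_k$ via the first-order coefficient bound and Lemma \ref{lemma:hk}, sum the resulting inequalities over $k$ with the weights of \eqref{eq:norm_weight}, and conclude by the norm equivalence of Proposition \ref{prop:equiv} and Gronwall's lemma. The only difference is presentational: you spell out the base $L^2$ case and the commutator cancellations in slightly more detail than the paper, which at this stage simply invokes the computations of the preceding subsections.
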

\begin{proof}
Let us come back to the equation satisfied by $\phi_k v_k$. Using what we have done previously, the second order terms cancel out and we can rearrange the remaining terms in the following way 
$$\phi_k \partial_t( \phi_k \vol_k) + \partial_x \phi_k \alpha \partial_x \phi_k \alpha \partial_x \vol_k =  \phi^2_k \widetilde R_k(v) \,,$$
where the function $\widetilde R_k$ gathers all the subprincipal terms we are now able to estimate 
$$ \| \widetilde R(v) \|_{L^2} \leq C_k(\|v\|_{W^{3,\infty}}) \left( \| \phi_k v_k \|_{L^2} + \| v \|_{H^k}\right)\,.$$
Taking the inner product with $v_k$, we obtain
$$\frac{1}{2}\frac{d}{d t} \| \phi_k \vol_k \|^2_{L^2}  = \left<\phi_k \widetilde R^k_{\alpha} | \phi_k v_k \right>\,,$$
and then, using Lemma \ref{lemma:norm}, we find some constant $C_k$ such that 
$$ \frac{d}{d t} \| \phi_k \vol_k \|^2_{L^2} \leq C_k(\|v\|_{W^{3,\infty}}) \left( \| \phi_k v_k \|_{L^2} +  \| v\|_{H^k} \right)\| \phi_k v_k\|_{L^2} \,.$$
Now, from the Proposition \ref{prop:equiv}, without renaming the constants for convenience and by summing the last inequalities for $s \geq k \geq 1$, each one being multiplied by the suitable constant, we get 
\begin{equation}
\frac{d}{d t} \left| v \right|^2_s \leq \max_{k}{\left\{C_k(\|v\|_{W^{3,\infty}})\right\}}\| v \|^2_{H^s} 
\end{equation}
Integrating and calling the maximal constant $C$, we get 
$$ \left| v(t,\cdot) \right|^2_s \leq \left| v_0 \right|^2_s + \int^t_{0} C( \|v(\tau, \cdot)\|_{W^{3,\infty}}) \| v(\tau,\cdot) \|^2_{H^s} d\tau \,.$$
Finally, using Proposition \ref{prop:equiv}, we write 
$$ \| v(t,\cdot) \|^2_{H^s} \leq c_s \left( \left| v_0 \right|^2_s + \int^t_{0} C( \|v(\tau, \cdot)\|_{W^{3,\infty}}) \| v(\tau,\cdot) \|^2_{H^s} d\tau \right) \,.$$
We finish the proof by Gronwall's lemma.
\end{proof}


\section{Existence of a smooth solution}
This section is devoted to the proof of local well-posedness for (qKdV). We first state the existence and uniqueness of a smooth solution to a parabolic regularized equation with regularized initial data. Then, using uniform a priori bounds in large norms on this smooth solution, we take a limit to prove the existence of solutions to (qKdV). Here, we shall adapt a method by Bona \& Smith \cite{Bona-Smith} and prove directly that the convergence occurs in the very space $\sC(0,T;H^s(\R))$ and check uniqueness and continuity of the solution map with respect to the initial data. 

\subsection{Study of a regularized equation}
Let us introduce a {\it small} parameter $\eps > 0$. For now, we consider the regularized parabolic equation
\begin{equation}
\label{eq:regul}
v_t + av_x + \partial_x \alpha \partial_x \alpha \partial_x v + \eps^4 \partial^4_x v = 0\,.
\end{equation}
Let $\chi$ be a function of class $\sC^{\infty}$ such that its Fourier transform is compactly supported and equals $1$ in a neighborhood of the origin and $\eta \in \sC^{\infty}(\R^*_+)$ a non decreasing function with limit $0$ at $0$ that will be specified later in the proof. 
Denote 
$$ \chi_{\eps} = \frac{1}{\eta(\eps)} \chi \left( \frac{\cdot}{\eta(\eps)}\right)\,.$$
Given $v_0 \in H^s(\R)$, we define a regularized initial data by 
\begin{equation}
\label{eq:initialdata}
v_{0,\eps} = \chi_\eps * v_0\,.
\end{equation}
Regarding the existence of a unique solution to \eqref{eq:regul} with initial data $v_{0,\eps}$, we refer to results on analytic semigroups of semi-linear PDEs in \cite{lunardi} (\S $7.3.2$) and \cite{pazy} (\S $8.4$) combined with semigroup techniques in \cite{BDD1d, BDDmultid}. More precisely, in their work on the Euler-Korteweg system, S. Benzoni-Gavage, R. Danchin, S. Descombes used a similar fourth order regularization to prove the local existence of solutions of linear problems with variable coefficients with a time of existence independent of $\eps$. They use properties of the analytic semigroup generated by $\partial^4_x$ and the Duhamel formula to prove the existence and uniqueness by a fixed point method. Their technique can be directly applied to our semi-linear regularized problem \eqref{eq:regul}. Thus Eq. \eqref{eq:regul} has a unique solution belonging to $\sC(0,T;H^{\infty}(\R))$, with a time of existence $T>0$ depending on the initial data $v_0$ and $\varepsilon$.

Then, we consider a sequence of smooth solutions $(v_\eps)_{\eps > 0}$. To take a limit when $\eps$ tends to zero, we have to justify that the time of existence of the solution may be bounded from below independently of $\varepsilon$. We look for uniform a priori bounds on the solution $v_\eps$ using the techniques presented in the above section. We differentiate Eq. \eqref{eq:regul} by respecting the skew-symmetry of the third order term and use a gauge to cancel remainders. After those two operations, we get that, for all $k \geq 0$
\begin{equation}
\label{eq:regul_deriv}
\phi_k \partial_t( \phi_k \vol_k) + \partial_x \phi_k \alpha \partial_x \phi_k \alpha \partial_x \vol_k  + \phi_k \eps^4 \partial^4_x(\phi_k v_k) =\phi_k^2 \widetilde R_k + \eps^4 \phi^2_k R_{\eps}\,,
\end{equation}
where the term $\widetilde R_k$ contains all previous zero order remainders and the term $R_{\eps}$ is a commutator term arising from the fourth order regularization. To deal with this new commutator term, we first prove the following lemma 

\begin{lemma}
\label{lemma:eps4}
Let $Q(v, \partial_x v, \cdots, \partial^{k+2p}_x v)$ be an homogeneous polynomial of weight $k+2p \in  [k, 2k)$ with terms multiplied by bounded functions of $v$. There exists a constant $\mu >0$ such that for all $v \in H^k(\R)$ and for all small $\eps >0$,
$$  \eps^{2p} \left< Q(v, \partial_x v, \cdots, \partial^{k+2p}_x v)| \partial^k_x v\right> \leq  C_{\mu}\|v\|^2_{H^k} + \eps^{2p}\mu \|\partial^{k+p}_x v\|^2_{L^2}  \,.$$
\end{lemma}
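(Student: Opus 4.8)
The plan is to reduce the inequality to a single monomial and then to exploit the dissipative budget: the genuinely dangerous feature is that $Q$ may contain a factor of order as high as $k+2p>k$, which is not controlled by $\|v\|_{H^k}$. Integration by parts against the test factor $\partial_x^k v$ will lower every order down to at most $m:=k+p$, the $p$ excess derivatives being precisely what the weight $\eps^{2p}$ and the dissipation norm $\|\partial_x^{m}v\|_{L^2}$ are there to absorb. The estimate is only substantive when $\partial_x^{m}v\in L^2$, i.e. for the smooth solutions $v_\eps$ to which it will be applied; for a general $v\in H^k$ the right-hand side is infinite and there is nothing to prove.

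By the triangle inequality and linearity of the inner product it suffices to treat one monomial $\beta(v)\prod_{l=1}^N\partial_x^{\gamma_l}v$, with $\beta$ a bounded function of $v$ (built from $\alpha,a$ and their derivatives), $\gamma_1\ge\cdots\ge\gamma_N\ge1$ and $\sum_l\gamma_l=k+2p$. Because $v(t,x)$ ranges in the compact set $J$ and the low derivatives of $v$ lie in $L^\infty$, $\beta(v)$ and its $x$-derivatives are bounded in terms of $\|v\|_{W^{2p,\infty}}$. Since $2p<k$, at most one factor can have order exceeding $m=k+p$: if $\gamma_1>m$ then $\sum_{l\ge2}\gamma_l=k+2p-\gamma_1\le p-1$, so each remaining $\partial_x^{\gamma_l}v$ has order $\le p-1<k$ and hence lies in $L^\infty$ by Sobolev embedding.

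I then split into two cases. If $\gamma_1\le m$, every factor (including $\partial_x^kv$, of order $k\le m$) has order $\le m$, and I estimate the product directly by Gagliardo--Nirenberg exactly as in the proof of Lemma \ref{lemma:hk}, interpolating each mid-order factor between $\|\partial_x^mv\|_{L^2}$ and the $L^\infty$ norms of the low derivatives. If $\gamma_1>m$, I integrate by parts $\gamma_1-m$ times, moving derivatives off $\partial_x^{\gamma_1}v$ onto the product $\beta(v)\,(\text{bounded factors})\,\partial_x^kv$. Every derivative landing on a bounded factor lowers the number of $v$-derivatives and produces a term of weight $<k+2p$ that is controlled by $\|v\|_{H^k}$ through Lemma \ref{lemma:hk}; the single term in which all $\gamma_1-m$ derivatives reach $\partial_x^kv$ turns the two dominant factors into $\partial_x^mv$ and $\partial_x^{m-s}v$ with $s=\sum_{l\ge2}\gamma_l\ge0$, so that again all orders are $\le m$ and the previous Gagliardo--Nirenberg estimate applies.

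In both cases the subcritical homogeneity $k+2p<2k$ guarantees that the total power of $\|\partial_x^mv\|_{L^2}$ produced by the interpolation is at most $2$; carrying along the factor $\eps^{2p}$ and applying Young's inequality then separates the contribution into a small part $\mu\,\eps^{2p}\|\partial_x^mv\|_{L^2}^2$ and a remainder bounded by $C_\mu\|v\|_{H^k}^2$, where $C_\mu$ depends on $\mu$, on $J$ and on $\|v\|_{W^{2p,\infty}}$. The main obstacle is the purely quadratic borderline monomial $\beta(v)\partial_x^{k+2p}v$: here integration by parts gives $(-1)^p\langle\beta(v)\partial_x^mv\,|\,\partial_x^mv\rangle$ up to lower-order terms, a quantity that cannot be made arbitrarily small and forces $\mu\ge\sup_J|\beta|$. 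Choosing $\mu$ at least equal to this fixed value (which is harmless since $\mu$ is only required to exist) closes the estimate.
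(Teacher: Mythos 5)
Your proof is correct for the statement as written and follows essentially the same route as the paper: reduction to a single monomial, integration by parts to redistribute the $p$ excess derivatives so that every factor has order at most $m=k+p$, Gagliardo--Nirenberg interpolation against $\|\partial_x^{k+p}v\|_{L^2}$ and the $L^\infty$ norms of low derivatives, and Young's inequality to split off $\mu\,\eps^{2p}\|\partial_x^{k+p}v\|_{L^2}^2$. The only organizational difference is that the paper moves all $p$ derivatives onto the test factor, replacing $\partial_x^k v$ by $\partial_x^{k+p}v$ throughout, whereas you only lower the top factor of $Q$ to order $m$ and track where the transferred derivatives land; the two bookkeepings are equivalent.

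The one substantive divergence is your treatment of the borderline monomial $\beta(v)\,\partial_x^{k+2p}v$, where the interpolation exponent degenerates and you concede $\mu\ge\sup_J|\beta|$. This does satisfy the literal statement ``there exists $\mu>0$'', but it is incompatible with the way Lemma~\ref{lemma:eps4} is invoked in Proposition~\ref{prop:estimates}, where $\mu$ must be taken smaller than $1/2$ so that $\eps^{2p}\mu\|\partial_x^{k+p}v\|_{L^2}^2$ can be absorbed by the parabolic dissipation $\|\eps^2\partial_x^{2}(\phi_k v_k)\|_{L^2}^2$; a fixed large $\mu$ would destroy that absorption. The paper's own proof quietly excludes this case by taking $\theta=(\gamma_l-k)/p<1$, i.e.\ by assuming no single factor carries the full weight, which is legitimate in the application because the remainders $R_\eps$ come from commutators with $\partial_x^4$ and therefore always contain at least two differentiated factors (one derivative necessarily falls on a coefficient such as $\phi_k(v)$ or $\alpha(v)$, producing a $v_x$). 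To make your argument serve the application rather than only the bare statement, you should either add this structural hypothesis on $Q$ or observe explicitly that the critical term $\langle\beta(v)\partial_x^{m}v\,|\,\partial_x^{m}v\rangle$ never occurs among the remainders to which the lemma is applied.
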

\begin{proof}
The proof uses that of Lemma \ref{lemma:hk}. A general form of Q is 
$$ Q(v, \partial_x v, \cdots, \partial^{k+2p}_x v)  = \sum_i \beta_i(v) \prod_{j=1}^k \partial^{\gamma_j}_x v \ ,  $$ 
where the $\beta_i$ are bounded on $J$ and for all $1 \leq j \leq k$, $\gamma_j  \leq k+2p$ satisfy $ \sum_{j=1}^k \gamma_j = k+2p$.
First, we take the inner product with $\partial^k_x v$ and use an integration by parts to get an expression of the form 
$$ \left< P(v, \partial_x v, \cdots, \partial^{k+p}_x v) | \partial^{k+p}_x v \right>$$
Now, thanks to the triangular inequality, it is sufficient to work on a monomial expression. Without changing notations for convenience, we consider 
$$P(v, \partial_x v, \cdots, \partial^{k+p}_x v)  = \prod_{j=1}^k \partial^{\gamma_j}_x v \,.$$
where for all $1 \leq j \leq k$, $\gamma_j  \leq k+p$ satisfy $ \sum_{j=1}^k \gamma_j = k+p$.

If for all $j$, $\gamma_j \leq k$, we proceed exactly as in Lemma \ref{lemma:hk} to get
$$  \left< P | \partial^{k+p}_x v \right> \lesssim \| v \|_{H^k} \| \partial^{k+p}_x v \|_{L^2}\,.$$ 
Now, if there is a factor with $k < \gamma_{l} \leq k+p$, we use the again Gagliardo-Nirenberg inequality. We know there could be only one because $p \leq k/2$. $$\| \partial^{\gamma_{l}}_x v \|_{L^{2}}  \lesssim \|\partial^{k+p}_x v \|^{\theta}_{L^2} \| \partial^{k}_x v\|^{1-\theta}_{L^2} \ , \textrm{where} \  \theta = \frac{\gamma_{l} - k}{p} < 1\,.$$
Finally, combining with previous terms, we have by Young's inequality 
$$ \| P(v, \partial_x v, \cdots, \partial^{k+p}_x v) \|_{L^2} \lesssim \|v \|^{1-\theta}_{H^k} \|\partial^{k+p}_x v \|^{\theta}_{L^2} \lesssim C_{\mu}\|v \|_{H^k}  + \mu \|\partial^{k+p}_x v \|_{L^2} \,.$$
Now, returning to our first polynomial expression  
$$ \left< P(v, \partial_x v, \cdots, \partial^{k+2p}_x v) | \partial^{k}_x v \right> \leq \left(C_{\mu}\|v \|_{H^k}  + \mu\|\partial^{k+p}_x v \|_{L^2} \right) \|\partial^{k+p}_x v \|_{L^2}\,.$$
Using another time Young's inequality, we find new constants $\mu'$ and $C_{\mu'}$ such that 
$$ \left< P(v, \partial_x v, \cdots, \partial^{k+2p}_x v) | \partial^{k}_x v \right> \leq C_{\mu'}\|v \|^2_{H^k}  + \mu' \|\partial^{k+p}_x v \|^2_{L^2}\,.$$
To conclude the proof, we multiply by the bounded factor $\eps^{2p}$ and find the constant $\mu$ to be the maximum of the $\mu'$ obtained for the various monomials $P$. 
\end{proof}

Let us now come back to the regularized equation \eqref{eq:regul}, and prove the following proposition.
\begin{proposition}
\label{prop:estimates}
Let $v_0 \in H^{q}(\R)$ with $q \geq 4$. For $s\geq q$, the unique solution of \eqref{eq:regul} with regularized initial data $v_{0,\eps}$ defined in \eqref{eq:initialdata} satisfies
$$ \| v(t,\cdot) \|_{H^s} \lesssim  \frac{\|v_0\|_{H^{q}}}{\eta(\eps)^{s-q}}\,.$$
\end{proposition}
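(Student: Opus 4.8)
The plan is to establish the estimate by combining the energy method developed in Section 2 with the new regularization commutator term $R_\eps$ controlled by Lemma~\ref{lemma:eps4}, and to track carefully how the regularized initial data $v_{0,\eps}=\chi_\eps*v_0$ gains regularity at the cost of negative powers of $\eta(\eps)$. First I would recall the standard smoothing property of the mollifier: since $\widehat{\chi_\eps}(\xi)=\widehat\chi(\eta(\eps)\xi)$ is supported in a ball of radius $O(1/\eta(\eps))$, for any $s\geq q$ one has
$$\|v_{0,\eps}\|_{H^s}\lesssim \frac{1}{\eta(\eps)^{s-q}}\|v_0\|_{H^q}\,,$$
which is precisely the claimed bound at time $t=0$. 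The content of the proposition is then that this estimate propagates in time with a constant that does not degenerate, for any fixed $\eps$.

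Next I would run the energy estimate on Eq.~\eqref{eq:regul_deriv}. Taking the inner product with $\phi_k v_k$, the skew-symmetric third order term vanishes exactly as in Proposition~\ref{prop:apriori}, the zero order remainder $\widetilde R_k$ is controlled as before by $C_k(\|v\|_{W^{3,\infty}})(\|\phi_k v_k\|_{L^2}+\|v\|_{H^k})$, and the genuinely new feature is the pair of terms coming from the fourth order regularization. The good term $\phi_k\eps^4\partial_x^4(\phi_k v_k)$, after integration by parts, contributes a nonnegative dissipation $\eps^4\|\partial_x^2(\phi_k v_k)\|_{L^2}^2$ (up to lower order corrections), while the commutator remainder $\eps^4\phi_k^2 R_\eps$ is exactly of the type handled by Lemma~\ref{lemma:eps4}: it is a homogeneous polynomial of weight in $[k,2k)$ multiplied by bounded functions of $v$, so its contribution is bounded by $C_\mu\|v\|_{H^k}^2+\eps^4\mu\|\partial_x^{k+2}v\|_{L^2}^2$. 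Choosing $\mu$ small enough, the bad dissipation-order term $\eps^4\mu\|\partial_x^{k+2}v\|_{L^2}^2$ is absorbed into the good dissipation coming from the regularization, and using the norm equivalence of Proposition~\ref{prop:equiv} we arrive at a differential inequality of the form
$$\frac{d}{dt}\|v\|_{H^s}^2\leq C_s(\|v\|_{W^{3,\infty}})\,\|v\|_{H^s}^2\,.$$
A Gronwall argument then gives $\|v(t,\cdot)\|_{H^s}\leq e^{Ct}\|v_{0,\eps}\|_{H^s}$, and combined with the mollifier bound above this yields the stated inequality, with the implicit constant depending on $T$ and on a bound for $\|v\|_{W^{3,\infty}}$.

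The \textbf{main obstacle} is that the Gronwall constant $C_s(\|v\|_{W^{3,\infty}})$ depends on the solution itself through its low-order norm, so the estimate is a priori conditional. To close it I would first run the argument at the base level $s=q$ ($q\geq 4$), where the $W^{3,\infty}$ norm is controlled by $\|v\|_{H^q}$ via Sobolev embedding (here $k\geq 4$ guarantees $H^{k}\hookrightarrow W^{3,\infty}$); this produces a genuine closed bound on $\|v(t,\cdot)\|_{H^q}$ on an $\eps$-uniform time interval, hence an $\eps$-uniform bound on $\|v\|_{W^{3,\infty}}$. Feeding that $W^{3,\infty}$ bound back into the higher-order inequality for general $s\geq q$ turns $C_s(\|v\|_{W^{3,\infty}})$ into a genuine constant, and the Gronwall step then produces the bound in the higher norm, with the factor $\eta(\eps)^{-(s-q)}$ entering solely through the regularized data. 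The only delicate bookkeeping is to verify that Lemma~\ref{lemma:eps4} applies uniformly across all the monomial pieces of $R_\eps$ so that a single small $\mu$ suffices to absorb every bad term into the single dissipation term; this is where the restriction that each piece have weight strictly below $2k$ (equivalently $p\leq k/2$) is used, and I would check it term by term using the explicit general form \eqref{eq:general_form} of the remainders.
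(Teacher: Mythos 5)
Your proposal is correct and follows essentially the same route as the paper: the gauged energy estimate of Proposition \ref{prop:apriori} applied to \eqref{eq:regul_deriv}, with the dissipation $\eps^4\|\partial_x^2(\phi_k v_k)\|_{L^2}^2$ kept on the left, the commutator remainder $R_\eps$ absorbed via Lemma \ref{lemma:eps4} with a Young constant $\mu<1/2$, then summation over $k$, the norm equivalence of Proposition \ref{prop:equiv}, Gronwall, and the mollifier bound $\|v_{0,\eps}\|_{H^s}\lesssim \eta(\eps)^{-(s-q)}\|v_0\|_{H^q}$. Your explicit bootstrap at the base level $s=q$ to make the $W^{3,\infty}$-dependent constant uniform is a slightly more careful rendering of what the paper leaves implicit (and only comments on after the proposition), but it is not a different method.
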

\begin{proof}
We follow the steps of proof of Proposition \ref{prop:apriori}. We first take the inner product of \eqref{eq:regul_deriv} with $v_k$ to obtain 
$$\frac{1}{2}\frac{d}{d t} \| \phi_k \vol_k \|^2_{L^2}  + \| \eps^2\partial^{2}_x (\phi_k v_k) \|^2_{L^2}= \left< \phi_k \widetilde R | \phi_k v_k \right> +  \eps^4\left<  \phi_k R_\eps| \phi_k v_k\right> \,.$$
Using the techniques of the previous section and the result of lemma \ref{lemma:eps4}, we obtain by choosing the Young's inequality constant such that $\mu < 1/2$, 
$$\frac{1}{2}\frac{d}{d t} \| \phi_k \vol_k \|^2_{L^2}  + \frac12 \| \eps^2\partial^{2}_x (\phi_k v_k) \|^2_{L^2}  \leq C_{\infty} \| \phi_k v_k \|^2_{L^2} +  C_{k}(\|v\|_{L^{\infty}},\|v_1\|_{L^{\infty}}) \| v\|_{H_k} \| \phi_k v_k\|_{L^2} \,.$$
Now, summing on $1 \leq k \leq s$ and multiplying by the suitable constants $C_k$ at each step, 
$$ \frac{d}{d t} \left| v \right|^2_s + c \sum^s_{k = 1} \| \eps^2\partial^{2}_x (\phi_k v_k) \|^2_{L^2} \leq \max_{k}{C_k} \ \| v \|^2_{H^s}\,.$$
By integration with respect to time
$$ \left|v(t,\cdot) \right|^2_s + \int^t_0 c \sum^s_{k = 1} \| \eps^2\partial^{2}_x (\phi_k v_k) \|^2_{L^2} d\tau \leq \left| v_0 \right|^2_s + \int^t_{0} C\| v \|^2_{H^s} d\tau \ .$$
Finally, 
$$\| v(t,\cdot) \|^2_{H^s} +  c_s\int^t_0 c \sum^s_{k = 1} \| \eps^2\partial^{2}_x (\phi_k v_k) \|^2_{L^2} d\tau\leq c_s \left( \left| v_0 \right|^2_s + \int^t_{0}C \| v\|^2_{H^s} d\tau \right)\ .$$
This last estimate and the norm equivalence give in particular 
$$ \| v(t,\cdot) \|^2_{H^s} \lesssim  \|v_0\|_{H^s} + \int^t_{0} C\| v\|^2_{H^s} d\tau \,.$$
A classical mollifier property from appendix C in \cite{BDDmultid} gives us 
$$\|v_0\|_{H^s} \leq C\frac{\|v_0\|_{H^{q}}}{\eta(\eps)^{s-q}}\,. $$
We finish the proof by Gronwall's lemma. 
\end{proof}
In particular, for initial data $v_0 \in H^s(\R)$, we get a uniform bound of the solution in $H^s(\R)$ for any $s\geq 4$, that is independent of $\eps$. In the proof by S. Benzoni-Gavage, R. Danchin, S. Descombes in \cite{BDD1d}, this uniform estimate is actually used directly in the fixed point argument to justify that the time of existence of the solution $v_\eps$ is independent of the regularization parameter $\varepsilon$. Here we obtain this uniformity a posteriori. From now on, we denote by $T>0$ the minimal common time of existence of all the solutions in the sequence $(v_\eps)_{\eps>0}$ depending only on $v_0$.

\subsection{Convergence to a solution of (qKdV)}
From our regularized equations, we have a sequence of solutions $(v_\eps)_{\eps > 0}$ belonging to $\sC(0,T;H^{\infty}(\R))$ for some $T>0$ given the same initial data $v_{0,\eps}$ regularized from $v_0 \in H^{q}(\R)$, $q \geq4$ for all the sequence.
We shall prove that this sequence is a Cauchy sequence in $\sC(0,T;H^s(\R))$ for any $s\geq  q \geq 4$. 

For $0 < \delta \leq \eps$, we denote by $v_\eps$ and $v_\delta$ the two corresponding solutions of \eqref{eq:regul} and we look for estimates on $z = v_{\eps} - v_\delta$. Then, our goal is to prove that $\|z\|_{L^{\infty}(0,T;H^s(\R))}$ goes to zero when $\eps$ and $\delta$ go to zero. We compute the difference between the two equations on $v_{\eps}$ and $v_\delta$ to find 
\begin{equation*}
\begin{array}{rcl}
z_t + a(v_\delta)z_x + \partial_x \alpha(v_\delta)\partial_x \alpha(v_\delta)\partial_x z + \delta^4 \partial^4_x z &=& \left( \eps^4 - \delta^4\right) \partial^4_x v_{\eps} \\[1ex] 
& + & \left( a(v_\delta) - a(v_\eps) \right)\partial_x  v_\eps\\[1ex]
&+&  \partial_x \alpha(v_\delta)\partial_x \alpha(v_\delta)\partial_x v_\eps \\[1ex]
&- &  \partial_x \alpha(v_\eps)\partial_x \alpha(v_\eps)\partial_x v_\eps\,.
\end{array}
\end{equation*}
We rewrite it in a more compact way
\begin{equation}
\label{eq:diff}
 z_t + \widetilde a_{\eps,\delta} z_x + \partial_x\left(\frac12 \alpha^2_\eps \right) z_{xx} + \partial_x \alpha_\delta \partial_x \alpha_\delta \partial_x z + \delta^4 \partial^4_x z = \left( \eps^4 - \delta^4 \right) \partial^4_x v_\eps + F_{\eps,\delta}(z)\,,
 \end{equation} 
with obvious notations. $F_{\eps,\delta}$ is a linear function with respect to $z$ of homogeneous weight $3$ and 
$$\widetilde a_{\eps,\delta}= \widetilde a_{\eps,\delta}(v_\eps, \partial_x v_\eps, \partial^2_x v_\eps, v_\delta, \partial_x v_\delta, \partial^2_x v_\delta)\,.$$ 
In this last formulation, we gathered all the subprincipal terms. Again, the first order one can be estimated by a direct computation since $\partial_x \widetilde a_{\eps,\delta} \in L^{\infty}(\R)$ according to the estimate on the solutions $v_\eps$ and $v_\delta$. Moreover, to take the limit, we will need estimates on low derivatives of the difference $z$ to compensate the loss involved by the high derivatives in $v_\eps$. More precisely, the arising of terms with too many derivatives on the coefficients $v_\eps$ forces us to use estimates given by Proposition \ref{prop:estimates} and then concede an inverse factor of $\eps$. To recover this factor we shall prove that a low number of derivatives on the difference $z = v_\eps - v_\delta$ can compensate this loss in $\eps$. Then, we shall prove the following lemma
\begin{lemma} 
\label{lemma:estimates_p}
For $0< \delta \leq \eps$, let $v_\eps$ (respectively $v_\delta$) denote the smooth solutions of \eqref{eq:regul} with parameter $\eps$ (respectively $\delta$) and regularized initial data $v_{0,\eps}$ (respectively $v_{0,\delta}$) with $v_0 \in H^{q}(\R)$, $q \geq 4$. Then, for all $0\leq p \leq q$, 
$$\| \partial^p_x \left( v_\eps - v_\delta \right)\|_{L^{\infty}(0,T; L^2(\R))} = o(\eta(\eps)^{q-p})$$
when $\eps$ goes to zero. 
\end{lemma}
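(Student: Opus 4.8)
The plan is to control the difference $z=v_\eps-v_\delta$ in every $L^2$-based norm at once by means of the single weighted functional
$$
N(t)^2=\sum_{p=0}^{q}\eta(\eps)^{-2(q-p)}\,\big\|\phi_p(v_\delta)\,(\alpha(v_\delta)\partial_x)^p z(t,\cdot)\big\|_{L^2}^2 ,
$$
and to show $N=o(1)$ as $\eps\to0$, uniformly for $0<\delta\le\eps$; by the norm equivalence of Proposition~\ref{prop:equiv} this is exactly the family of claimed bounds $\|\partial^p_x z\|_{L^{\infty}(0,T;L^2(\R))}=o(\eta(\eps)^{q-p})$. The starting value is supplied by the regularized data: the mollifier estimates recalled from appendix~C of \cite{BDDmultid} give $\|v_{0,\eps}-v_{0,\delta}\|_{H^p}=o(\eta(\eps)^{q-p})$ for every $0\le p\le q$, hence $N(0)=o(1)$.

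For fixed $p$ I would estimate the corresponding summand by the very mechanism of Section~2 applied to the \emph{linear} equation \eqref{eq:diff}. Its leading order operator is the skew-symmetric $\partial_x\alpha(v_\delta)\partial_x\alpha(v_\delta)\partial_x z$, so I would differentiate with $(\alpha(v_\delta)\partial_x)^p$ and use the gauge $\phi_p(v_\delta)=\alpha(v_\delta)^{-(p-1)/3}$: as in \eqref{eq:edos}--\eqref{eq:regul_deriv} the third order part cancels in the inner product, and the gauge is chosen so as to annihilate the second order remainder $\partial_x(\tfrac12\alpha_\eps^2)z_{xx}$, the mismatch between $\alpha_\eps$ and $\alpha_\delta$ being $O(z)$ and therefore absorbable into the linear-in-$z$ remainder. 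The parabolic term $\delta^4\partial^4_x z$ contributes a favourable (dissipative) sign, its commutators being tamed by the Young splitting of Lemma~\ref{lemma:eps4}, while the first order term is handled directly since $\partial_x\widetilde a_{\eps,\delta}\in L^{\infty}(\R)$, exactly as $g_k\partial_x v_k$ was in Section~2. What survives on the right is the forcing $(\eps^4-\delta^4)\partial^4_x v_\eps$ together with a gauged, linear-in-$z$ remainder of weight $p+3$ coming from $F_{\eps,\delta}(z)$, in which the effective order in $z$ has been pushed down to at most $p$.

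The heart of the argument is the estimation of these two right-hand sides inside $N$, using the uniform-in-$(\eps,\delta)$ bounds $\|v_\eps\|_{H^s}\lesssim \eta(\eps)^{-(s-q)}\|v_0\|_{H^q}$ of Proposition~\ref{prop:estimates} for all coefficient norms. After $p$ differentiations the forcing is $(\eps^4-\delta^4)\partial^{p+4}_x v_\eps$, bounded in $L^2$ by $\eps^4\,\eta(\eps)^{-(p+4-q)}$; weighted and paired with $\partial^p_x z$ it contributes $(\eps/\eta(\eps))^4\,N$ to $\tfrac{d}{dt}N^2$. For the $F$-remainder each monomial is linear in $z$ of total weight $p+3$: I would place the single $z$-factor $\partial^a_x z$ ($a\le p$) in $L^{\infty}$ via $H^1(\R)\hookrightarrow L^{\infty}(\R)$ and the high derivatives falling on $v_\eps$ in $L^2$ via Proposition~\ref{prop:estimates}. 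For the extreme monomial $z\,\partial^{p+3}_x v_\eps$ this gives, after weighting, a contribution $\lesssim \eta(\eps)^{q-4}\,N^2$ (here $\|z\|_{L^{\infty}}\lesssim \eta(\eps)^{q-1}N$ from the definition of $N$ and $\|\partial^{p+3}_x v_\eps\|_{L^2}\lesssim\eta(\eps)^{-(p+3-q)}$), whose surplus power $\eta(\eps)^{q-4}$ is bounded \emph{precisely} because $q\ge4$; the remaining monomials, carrying more weight on $z$ and less on $v_\eps$, are no worse, and the lower-order $z$-factors entering through near-top derivatives are reabsorbed by $N^2$ thanks to the very weights $\eta(\eps)^{-2(q-p)}$.

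Collecting the above yields a differential inequality $\tfrac{d}{dt}N^2\lesssim N^2+(\eps/\eta(\eps))^4\,N$. I would then use the freedom still left in the mollifier rate to fix $\eta$ so that $\eps=o(\eta(\eps))$, making the forcing term $(\eps/\eta(\eps))^4$ tend to $0$; Gronwall's lemma, started from $N(0)=o(1)$, then gives $N=o(1)$ and hence the claim. The main obstacle — and the reason the threshold $q\ge4$ is sharp here — is exactly this compensation of the derivative loss coming from the rough coefficient $v_\eps$ by the surplus decay of the low-order differences of $z$ encoded in $N$; a secondary, purely technical, point is to verify that replacing $\alpha(v_\eps)$ by $\alpha(v_\delta)$ in the gauge and symmetrizing the second-order coefficient produces only $O(z)$ errors that re-enter the same weight-$(p+3)$ remainder.
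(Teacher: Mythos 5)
Your overall architecture --- bundling all the orders $0\le p\le q$ into the single weighted functional $N$ and closing one Gronwall inequality --- is a legitimate and in fact tidier reorganization of what the paper does in two separate steps (an $L^2$ estimate, then an $H^q$ estimate that bootstraps on the lower-order ones, then interpolation), and your accounting of the forcing term $(\eps^4-\delta^4)\partial_x^{p+4}v_\eps$ and of the linear-in-$z$ remainder of weight $p+3$ matches the paper's. However, there is a genuine gap in your treatment of the second-order term, and it is not the ``secondary, purely technical point'' you declare it to be. The gauge $\phi_p(v_\delta)=\alpha(v_\delta)^{-(p-1)/3}$ only cancels the coefficient $f_p(v_\delta)=(p-1)\alpha_\delta\partial_x\alpha_\delta$ produced by the differentiation of the skew-symmetric part; it does nothing to $\partial_x(\tfrac12\alpha_\eps^2)\,\partial_x^2 z_p$, whose coefficient is $O(\partial_x v_\eps)$, not small. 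Even after the charitable correction (using $\alpha(v_\delta)^{-p/3}$ so as to cancel the symmetrized coefficient $p\,\alpha_\delta\partial_x\alpha_\delta$), you are left with a term $c\,\partial_x^2 z_p$ where $c=\partial_x\bigl(\tfrac12\alpha_\eps^2-\tfrac12\alpha_\delta^2\bigr)=O(z,\partial_x z)$. This is \emph{not} a weight-$(p+3)$ zeroth-order remainder: pairing it with $z_p$ and integrating by parts unavoidably produces $-\int c\,(\partial_x z_p)^2$, i.e.\ it costs $\|c\|_{L^\infty}\|\partial_x z_p\|_{L^2}^2$. At the top index $p=q$ this is $\|z\|_{H^{q+1}}^2$, which your functional $N$ does not control and which, by Proposition \ref{prop:estimates}, can only be bounded by $\eta(\delta)^{-2}$ --- not uniformly in $0<\delta\le\eps$. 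The smallness $\|c\|_{L^\infty}=o(\eta(\eps)^{q-3/2})$ cannot compensate $\eta(\delta)^{-2}$, nor can the dissipation $\delta^4\|\partial_x^2(\cdot)\|_{L^2}^2$ absorb it without a $\delta^{-4}$ loss; extending $N$ one order higher only reproduces the same problem at $p=q+1$.

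The missing idea is the paper's \emph{second} gauge $\phi_{\eps,\delta}$: rather than cancelling only the $v_\delta$-part and treating the rest as an error, one imposes on the total gauge $\Phi_p=\phi_p(v_\delta)\,\phi_{\eps,\delta}$ the full cancellation of the second-order coefficient, which leads to the $x$-ODE \eqref{eq:gauge_eps},
$$\frac{\partial_x\phi_{\eps,\delta}}{\phi_{\eps,\delta}}=-\frac13\,\frac{\partial_x\alpha_\eps}{\alpha_\eps}-\frac{\partial_x(\alpha_\eps^2)}{6}\left(\frac{1}{\alpha_\delta^2}-\frac{1}{\alpha_\eps^2}\right),$$
whose right-hand side is an exact derivative plus an $L^1(\R)$ function (the bracket being $O(z)$), so that $\phi_{\eps,\delta}$ exists, is bounded above and away from zero, and has bounded space and time derivatives. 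With this exact cancellation the surviving remainders really are of order zero in $z_p$, and your Gronwall scheme closes. With that repair your single-functional argument would go through and would even spare the paper's final interpolation step; without it, the estimate fails precisely at the endpoint $p=q$ that carries all the others.
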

\begin{proof}
To get these new estimates, we start by looking for an estimate in $L^2(\R)$ and then in $H^{q}(\R)$. The structure of Eq. \eqref{eq:diff} is obviously different from the one we have worked with previously. In fact, it is principally the arising of the second order term $\partial_x\left(\frac12 \alpha^2_\eps \right) z_{xx}$ which causes troubles. Again, we use a {\it gauge} $\phi_{\eps, \delta}$ to deal with this term and multiply the equation by $\phi^2_{\eps, \delta}$. Exactly as before, by computing commutators we find 
\begin{equation}
\label{eq:regul_phi_l2}
\begin{array}{rcl}
\phi_{\eps, \delta} \partial_t ( \phi_{\eps, \delta} z)  &+& \phi_{\eps, \delta} \widetilde a_{\eps, \delta} \partial_x (\phi_{\eps, \delta}  z )\   + \  \partial_x \alpha_\delta\phi_{\eps, \delta} \partial_x \alpha_\delta\phi_{\eps, \delta} \partial_x z + \delta^4\phi_{\eps, \delta} \partial^4_x \left( \phi_{\eps, \delta} z\right) \\[1ex]
& =&\phi^2_{\eps, \delta} \left( \eps^4 - \delta^4 \right) \partial^4_x v_\eps \  + \ \phi^2_{\eps, \delta} \widetilde F_{\eps, \delta}(z) + \  \delta^4 \phi_{\eps, \delta} [\phi_{\eps, \delta} , \partial^4_x] z \,.
\end{array}
\end{equation}
In the previous expression, we have already used the cancellation due to our gauge and the remainders of order zero are gathered in the term $\widetilde F_{\eps, \delta}$. As before, the ODE the gauge has to satisfy is 
$$ 3 \alpha^2_\delta \phi_{\eps, \delta} \partial_x \phi_{\eps, \delta}  = -  \phi^2_{\eps, \delta} \partial_x \left( \frac12 \alpha^2_\eps\right)\,.$$
To solve this last equation, we rewrite it as 
\begin{equation}
\label{eq:gauge_eps}
\frac{\partial_x \phi_{\eps, \delta}}{\phi_{\eps, \delta}} = -\frac13 \frac{\partial_x \alpha_\eps}{\alpha_\eps} -  \frac{\partial_x(\alpha^2_\eps) }{6} \left( \frac{1}{\alpha^2_\delta} - \frac{1}{\alpha^2_\eps} \right)\,.
\end{equation}
The first part of the right hand side is directly integrable and causes no trouble and the second part belongs to $L^1(\R)$. This yields that $\phi_{\eps, \delta}$ exists and belongs to $L^{\infty}(\R)$ and thus, from Eq. \eqref{eq:gauge_eps}, we can ensure that for $0 \leq l \leq 3$
$$ \partial^l_x \phi_{\eps, \delta} \in L^{\infty}(\R)\,. $$
Moreover, using the equations satisfied by $v_\eps$ and $v_\delta$ and the estimates we have for both of them, we find by a Gagliardo-Nirenberg inequality 
\begin{equation*}
\begin{array}{rcl}
\| \partial_t \phi_{\eps, \delta} \|_{L^\infty} &\lesssim &C + \left(\eps^4 \| \partial^5_x v_\eps \|_{L^\infty} + \delta^4\| \partial^5_x v_\delta \|_{L^\infty} \right)\,, \\[1ex]
&\lesssim &C + \frac{\eps^4}{\eta(\eps)^{3/2}} +  \frac{\delta^4}{\eta(\delta)^{3/2}}\,,\\[1ex]
&\leq& C\,,
\end{array}
\end{equation*}
for a well chosen function $\eta$. Those properties justify that all the remainders from commutators gathered in $\widetilde F_{\eps, \delta}$ are bounded in $L^2(\R)$ as the gauge and all its derivatives arising in the computation are bounded in $L^{\infty}(\R)$. So the adding of this gauge does not change anything from what was done in the previous section, especially regarding the norm equivalences. 

There are two terms left to control. The first one
$$ \delta^4 \phi_{\eps, \delta} [\phi_{\eps, \delta} , \partial^4_x] z$$
is treated using the same scheme of proof as in Lemma \ref{lemma:eps4}. By the Gagliardo-Nirenberg inequality and the estimates on $(v_\eps)_{\eps>0}$, we get constants $\mu > 0$ and $C_\mu$ such that 
$$\delta^4 \left< \phi_{\eps, \delta} [\phi_{\eps, \delta} , \partial^4_x]z | z \right> \leq \delta^{4} C_{\mu}\|v^n\|^2_{L^2} + \delta^{4}\mu \|\partial^{2}_x z\|^2_{L^2}$$
 For the second one, we write 
$$\left< \phi_{\eps, \delta} \left( \eps^4 - \delta^4\right) \partial^4_x v_\eps | \phi_{\eps, \delta} z \right> \lesssim \eps^4 \|\partial^4_x v_\eps\|_{L^2}\|\phi_{\eps, \delta} z\|_{L^2} \lesssim \frac{\eps^4}{\eta(\eps)^{4-q}}\|v_0\|_{H^{q}}\|\phi_{\eps, \delta}z\|_{L^2} \,.$$
Finally, choosing Young's inequality constant such that $\mu < 1/2$, the estimate we get by taking the inner product of eq. \eqref{eq:regul_phi_l2} with $z$ is 
$$\frac12 \frac{d}{dt} \|\phi_{\eps, \delta} z\|^2_{L^2}  + \frac{\delta^4}{2} \|\partial^2_x (\phi_{\eps, \delta} z)\|^2_{L^2} \leq  C \|\phi_{\eps, \delta} z\|^2_{L^2} +\frac{\eps^4}{\eta(\eps)^{4-q}}\|v_0\|_{H^{q}}\|\phi_{\eps, \delta}z\|_{L^2}\,.$$
An integration in time and Gronwall's lemma yield 
$$ \|\phi_{\eps, \delta} z(t)\|_{L^2} \leq  \|\phi_{\eps, \delta} z(0)\|_{L^2} +  \frac{\eps^4}{\eta(\eps)^{4-q}}\|v_0\|_{H^{q}}\,.$$
A classical property on mollifiers, again from appendix C of \cite{BDDmultid}, and norm equivalences finally give 
$$  \|z\|_{L^{\infty}(0,T;L^2(\R))} = o(\eta(\eps)^{q}) +\frac{\eps^4}{\eta(\eps)^{4-q}}  =  o(\eta(\eps)^{q})\,,$$
for a suitable choice of $\eta$, namely
$$ \eta(\eps) = \eps^\beta\,$$ 
with $\beta<1$.

This last relation yields the desired estimate in $L^2(\R)$. Let us now focus on the property in $H^{q}(\R)$ To do this, we need to differentiate the equation $q$ times using again our differential operator $\alpha(v_\delta)\partial_x $ and the corresponding gauge $\phi_{q}(v_\delta)$ from the previous section.
 We recall that $z$ satisfies 
$$ z_t + \widetilde a_{\eps,\delta} z_x + \partial_x\left(\frac12 \alpha^2_\eps \right) z_{xx} + \partial_x \alpha_\delta \partial_x \alpha_\delta \partial_x z - \delta^4 \partial^4_x z = \left( \eps^4 - \delta^4 \right) \partial^4_x v_\eps + F_{\eps,\delta}(z)\,,$$ 
The total gauge we will use is $\Phi_{q}= \phi_{q}(v_\delta)\phi_{\eps, \delta}$. Then, exactly as before, differentiating $q$ times, multiplying by $\Phi^2_{q}$ and computing commutators 
\begin{equation}
\label{eq:regul_deriv}
\begin{array}{rcl}
\Phi_{q} \partial_t ( \Phi_{q} z_{q})  &+& \Phi_{q} \widetilde a_{\eps,\delta} \partial_x (\Phi_{q}  z_{q} )\   + \  \partial_x \alpha_\delta \Phi_{q} \partial_x \alpha_\delta \Phi_{q} \partial_x z_{q} + \delta^4\Phi_{q} \partial^4_x \Phi_{q} z_{q} \\[1ex]
& =& \ \Phi^2_{q} \widetilde F_{\eps,\delta}(z, \cdots , z_{q}) + \  \delta^4 \Phi_{q} [\Phi_{q} , \partial^4_x] z_{q} \ + \ \delta^4 (\Phi_{q})^2 [\left(\partial_x(\alpha_\delta \cdot )\right)^{q} , \partial^4_x] z  \\[1ex]
& & + \  \Phi^2_{q} \left( \eps^4 - \delta^4 \right) \left(\partial_x(\alpha_\delta \cdot)\right)^{q} \partial^4_x v_\eps\,.
\end{array}
\end{equation}
In this last expression, we have already used cancellations from the gauges and the term $\widetilde F_{\eps, \delta}$ contains all the remainders of commutators we have encountered before. This term involves up to $q + 3$ derivatives on $v_\eps$ and up to $q$ derivatives on $v_\delta$ and will have to be estimated again to make sure it remains bounded uniformly in $\eps$. In the following we deal with the four terms in the right hand side.

Let us deal first with the second one. This term is the same as in the $L^2$ case treated previously and we will later set Young's inequality constant to control it. 

For the third one, we use directly the lemma \ref{lemma:eps4} with $k = 4$ and $p = 2$ to get that there exists constants $\mu > 0$ and $C_\mu$ such that 
$$\delta^4 \left< [\left(\partial_x(\alpha_\delta \cdot)\right)^{q} , \partial^4_x] z | z_{q} \right> \lesssim  C_{\mu}\|z\|^2_{H^q} + \delta^4\mu \|\partial^{2}_x z_{q}\|^2_{L^2}\,.$$

We rewrite the last term as
$$\Phi^2_{q} \left( \eps^4 - \delta^4 \right) \left(\partial_x((\alpha_\delta - \alpha_\eps) \cdot)\right)^{q} \partial^4_x v_\eps + \Phi^2_{q} \left( \eps^4 - \delta^4 \right) \left(\partial_x(\alpha_\eps \cdot)\right)^{q} \partial^4_x v_\eps\,.$$
In the first part, it appears at most order $q$ derivatives of $z$ and order $q + 4$ derivatives of $v_\eps$. We rewrite it as a sum of terms of the general form 
$$\left( \eps^4 - \delta^4 \right) \partial^q_x z \ \partial^{q+4-l}_x v_\eps\,, $$
with $0\leq l \leq q$. Using a bootstrap argument, we get the estimate 
$$\left( \eps^4 - \delta^4 \right) \|\partial^l_x z \ \partial^{q+4-l}_x v_\eps \|_{L^2} \lesssim \eps^4 \|\partial^q_x z\|_{L^2}  \|\partial^{q+4-l}_x v_\eps \|_{L^\infty} = \frac{o(\eta(\eps)^{q-l}) \ \eps^4}{\eta(\eps)^{(4-l)/2 + (5-l)/2}} = o(1)\,.$$
For the second part of this last term, the term which has the worst possible loss of derivative is
$$\left( \eps^4 - \delta^4 \right) v_\eps \ \partial^{q+4}_x v_\eps\,.$$
In this case, we estimate as before 
$$\left( \eps^4 - \delta^4 \right) \|v_\eps \ \partial^{q+4}_x v_\eps\|_{L^2} \lesssim \frac{\eps^4}{\eta(\eps)^{4}} = o(1) \,,$$
with our definition of the function $\eta$. This final estimate proves that the last term in eq. \eqref{eq:regul_deriv} goes to zero when $\eps$ goes to zero. 

We are left with $\widetilde F_{\eps,\delta}$, which we rewrite in general form 
$$ \sum_i \beta_i(v_\eps, v_\delta,z) \prod_{j,l,m} \partial^{\gamma_j}_x v_\eps \partial^{\gamma_l}_x v_\delta \partial^{\gamma_m}_x z\,,$$
where for all $j$, $l$ and $m$, $\gamma_j \leq q + 3$, $\gamma_l \leq q$, $\gamma_m \leq q$ and 
$$\sum_{j, l,m}  \gamma_j + \gamma_l + \gamma_m = q + 3\,.$$ 
If $\gamma_j \leq q$ then the Gagliardo-Nirenberg inequality gives us our estimate as $3 \leq \gamma_m \leq q$ and there is no loss in $1/\eta(\eps)$ involved. The issue occurs when there is more than $q$ derivatives on either $v_\eps$. In this case, $\gamma_l \leq 3$ and the corresponding factors are bounded in $L^{\infty}$. Then, we write for $1 \leq l \leq 3$ 
$$ \|\partial^{q + 3 - l}_x v_\eps\|_{L^\infty} \|\partial^l_x z\|_{L^2} = \frac{o(\eta(\eps)^{q - l})}{\eta(\eps)^{3+1/2-l}} = o(1)\,.$$

Finally, let us gather all we have done before. We take the inner product of Eq. \eqref{eq:regul_deriv} by $z_{q}$ and, gathering all Young's inequalities constants such that their sum is less than $1/2$, we can write
$$  \frac12 \frac{d}{dt} \|\Phi_{q} z_{q}\|^2_{L^2}  + \frac{\delta^4}{2} \|\partial^2_x (\Phi_{q} z_{q})\|^2_{L^2} \lesssim \|\Phi_{q} z_{q}\|^2_{L^2} +  F(\eps)\|\Phi_{q} z_{q}||_{L^2}\,, $$
where we gathered in $F(\eps)=o(1)$ all the previously treated terms. Gronwall's lemma yields 
\begin{equation}
\label{eq:bs_estimate}
\|\Phi_{q} z_{q}(t,\cdot)\|_{L^2} \lesssim \|\Phi_{q} z_{q}(0, \cdot )\|_{L^2} + F(\eps)\,,
\end{equation}
and with mollifiers properties 
$$\|\Phi_{q} z_{q}\|_{L^{\infty}(0,T;L^2(\R))}= o(1)\,.$$
 Together with $\|\phi_{\eps,\delta} z\|^2_{L^{\infty}(0,T;L^2(\R))} = o(\eta(\eps)^{q})$, we complete the proof by interpolation and norm equivalences. 
\end{proof}

\begin{corollary}
\label{cor:existence}
The sequence $(v_\eps)_{\eps>0}$ is a Cauchy sequence in $\sC(0,T;H^s(\R))$ for any $s > 3+1/2$. Then, its limit $v \in \sC(0,T;H^s(\R))\cap \sC^1(0,T;H^{s-3}(\R))$ is a solution to (qKdV) with initial data $v_0$. 
\end{corollary}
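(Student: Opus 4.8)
The plan is to deduce the statement from the two families of estimates already established: the low-regularity Cauchy bounds of Lemma~\ref{lemma:estimates_p} will furnish a limit, while the uniform high-regularity bounds of Proposition~\ref{prop:estimates} will let me pass to the limit in the regularized equation~\eqref{eq:regul} and identify this limit as a solution of (qKdV).

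\emph{Cauchy property.} For $0 < \delta \le \eps$ set $z = v_\eps - v_\delta$. Lemma~\ref{lemma:estimates_p} gives, for every $0 \le p \le q$,
$$\|\partial^p_x z\|_{L^\infty(0,T;L^2(\R))} = o\big(\eta(\eps)^{q-p}\big), \qquad \eps \to 0.$$
Since $\|z\|^2_{L^\infty(0,T;H^q)} \le \sum_{p=0}^q \|\partial^p_x z\|^2_{L^\infty(0,T;L^2)}$, the slowest-decaying contribution is the one with $p = q$, which is $o(1)$, so $\|z\|_{L^\infty(0,T;H^q(\R))} = o(1)$. Hence $(v_\eps)_{\eps>0}$ is a Cauchy sequence in the Banach space $\sC(0,T;H^q(\R))$, and the embedding $H^q(\R) \hookrightarrow H^s(\R)$ shows it is Cauchy in $\sC(0,T;H^s(\R))$ for every $s \le q$, in particular for $3 + 1/2 < s \le q$. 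Let $v \in \sC(0,T;H^q(\R))$ denote its limit.

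\emph{Identification of the limit.} It remains to pass to the limit $\eps \to 0$ in the equation $\partial_t v_\eps + a(v_\eps)\partial_x v_\eps + \partial_x\alpha\partial_x\alpha\partial_x v_\eps + \eps^4 \partial^4_x v_\eps = 0$. For the parabolic term, Proposition~\ref{prop:estimates} yields $\|\eps^4 \partial^4_x v_\eps\|_{H^{s-3}} \le \eps^4 \|v_\eps\|_{H^{s+1}} \lesssim \eps^4 \eta(\eps)^{q-s-1}$; since $s \le q$ and $\eta(\eps) = \eps^\beta$ with $\beta < 1$, the exponent of $\eps$ is positive, so $\eps^4 \partial^4_x v_\eps \to 0$ in $\sC(0,T;H^{s-3}(\R))$. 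For the nonlinear terms, the threshold $s > 3 + 1/2$ provides the embedding $H^s(\R) \hookrightarrow \sC^3(\R)$; composition with the smooth functions $a$ and $\alpha$ is then continuous on $H^s(\R)$, and standard tame product estimates show that the map $w \mapsto a(w)\partial_x w + \partial_x\alpha\partial_x\alpha\partial_x w$ is continuous from $H^s(\R)$ into $H^{s-3}(\R)$. As $v_\eps \to v$ in $\sC(0,T;H^s(\R))$, these terms converge to their counterparts at $v$ in $\sC(0,T;H^{s-3}(\R))$, whence
$$\partial_t v = -\,a(v)\partial_x v - \partial_x\alpha\partial_x\alpha\partial_x v \quad \text{in } \sC(0,T;H^{s-3}(\R)).$$
Thus $v$ solves~\eqref{eq:motion} and $\partial_t v \in \sC(0,T;H^{s-3}(\R))$, i.e. $v \in \sC^1(0,T;H^{s-3}(\R))$. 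Finally $v(0,\cdot) = \lim_{\eps\to 0} v_{0,\eps} = \lim_{\eps\to 0} \chi_\eps * v_0 = v_0$ by convergence of the mollification~\eqref{eq:initialdata} in $H^q(\R)$.

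\emph{Main obstacle.} The soft part (completeness and the embedding $H^q \hookrightarrow H^s$) produces the limit at once; the delicate point is the convergence of the quasilinear dispersive term, namely the continuity of $w \mapsto \partial_x\alpha\partial_x\alpha\partial_x w$ from $H^s(\R)$ into $H^{s-3}(\R)$. This requires estimating the compositions $\alpha(v_\eps)$ and their first two derivatives by $\|v_\eps - v\|_{H^s}$ through Moser-type inequalities, and it is exactly here that the threshold $s > 3 + 1/2$ (ensuring $v \in \sC^3(\R)$, so that all lower-order factors are controlled in $L^\infty$) is used.
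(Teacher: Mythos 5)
Your proposal is correct and follows essentially the argument the paper intends: the Cauchy property is exactly what Lemma \ref{lemma:estimates_p} delivers (the $p=q$ case giving $o(1)$, lower $p$'s even better), and the identification of the limit is the standard passage to the limit in \eqref{eq:regul}, with Proposition \ref{prop:estimates} killing the $\eps^4\partial_x^4 v_\eps$ term and Moser/composition estimates (valid for $s>3+1/2$, read as $3+1/2<s\le q$) handling the quasilinear terms, plus mollifier convergence for the initial data. The only minor imprecision is invoking Proposition \ref{prop:estimates} at regularity $s+1$ even when $s+1<q$, where the uniform $H^q$ bound suffices directly; this does not affect the argument.
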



\section{Uniqueness and continuity with respect to the data}
As announced, here we adapt a technique originally introduced by Bona \& Smith in \cite{Bona-Smith} and later exploited in many papers, see \cite{Bburgers, BDD1d, BDDmultid} for example. We prove the following theorem 
\begin{theorem}
For an integer $s \geq  4$, let $K$ be a strictly positive constant. For all $v_0 \in H^s(\R)$ of norm not greater than $K$, the mapping 
\begin{equation*}
\label{eq:mapping}
\begin{array}{rcl}
H^s(\R) &\to &\sC(0,T;H^s(\R)) \cap \sC^1(0,T, H^{s-3}(\R))\\
v_0 &\mapsto &v \,, \quad \text{ solution of (qKdV) with initial data } v_0\,
\end{array}
\end{equation*}
is continuous. 
\end{theorem}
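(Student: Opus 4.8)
The plan is to establish the asserted continuity in its sequential form: since both $H^s(\R)$ and $\sC(0,T;H^s(\R)) \cap \sC^1(0,T;H^{s-3}(\R))$ are metric, it suffices to show that whenever $v_0^n \to v_0$ in $H^s(\R)$ with $\|v_0^n\|_{H^s}, \|v_0\|_{H^s} \leq K$, the associated solutions satisfy $v^n \to v$ in $\sC(0,T;H^s(\R))$. The $\sC^1(0,T;H^{s-3}(\R))$ convergence then comes for free: writing \eqref{eq:motion} as $\partial_t v = -a\,\partial_x v - \partial_x \alpha \partial_x \alpha \partial_x v$, the right-hand side is a smooth nonlinear map $H^s(\R) \to H^{s-3}(\R)$ (composition and differentiation, licit since $s\geq 4$), so $\partial_t v^n \to \partial_t v$ in $\sC(0,T;H^{s-3}(\R))$ follows from the convergence at the $H^s$ level. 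We first fix a common existence time $T$, valid for all the data in the ball of radius $K$, which is legitimate because Proposition~\ref{prop:apriori} and the construction of Section~3 furnish a lower bound on the lifespan depending only on $K$.

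The core is the Bona--Smith decomposition. For each datum denote by $v_\eps$ (respectively $v_\eps^n$) the solution of the parabolic regularization \eqref{eq:regul} with mollified initial data $\chi_\eps * v_0$ (respectively $\chi_\eps * v_0^n$) as in \eqref{eq:initialdata}; by Corollary~\ref{cor:existence} these converge, as $\eps \to 0$, to $v$ and $v^n$ in $\sC(0,T;H^s(\R))$. I would then split
\[
v^n - v = (v^n - v_\eps^n) + (v_\eps^n - v_\eps) + (v_\eps - v),
\]
where the two outer terms are \emph{regularization errors} (same datum, $\eps \to 0$) and the middle term is a \emph{stability term} (fixed $\eps$, two distinct but smooth data). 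The conclusion is reached by the usual triangle-inequality argument carried out in two stages: first choose $\eps$ small to make the outer terms small \emph{uniformly in $n$}, then, for that $\eps$, let $n \to \infty$ to kill the middle term.

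For the outer terms, Lemma~\ref{lemma:estimates_p} applied with $q = s$ gives, for each fixed datum, $\|v_\eps - v\|_{\sC(0,T;H^s)} = o(1)$ as $\eps \to 0$ (take $p=s$ and let the finer parameter tend to $0$). The delicate point is \textbf{uniformity in $n$}: since $(v_0^n)_n$ converges, the set $\{v_0\} \cup \{v_0^n : n \in \N\}$ is compact in $H^s(\R)$, so the mollifier quantities that drive the estimates of Lemma~\ref{lemma:estimates_p} --- the top error $\|w - \chi_\eps * w\|_{H^s}$ and the gains $\|w - \chi_\eps * w\|_{H^{s-p}} = o(\eta(\eps)^p)$ --- are uniformly small over this compact set, while all remaining constants depend only on $K$ through Propositions~\ref{prop:apriori} and \ref{prop:estimates}. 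Hence $\sup_n \|v^n - v_\eps^n\|_{\sC(0,T;H^s)} \to 0$ as $\eps \to 0$. For the middle term I fix $\eps$ and note that $z = v_\eps^n - v_\eps$ solves the difference of two copies of \eqref{eq:regul} with the \emph{same} parabolic parameter, so the dangerous $(\eps^4-\delta^4)\partial_x^4 v_\eps$ term is absent and the fourth-order dissipation $\eps^4 \partial_x^4$ is genuinely present. A plain parabolic energy estimate then suffices --- no gauge is needed, because for fixed $\eps > 0$ the good sign term $\eps^4\|\partial_x^{s+2} z\|_{L^2}^2$ absorbs the second-order coefficient terms by Young's inequality --- yielding $\|z\|_{\sC(0,T;H^s)} \leq C(\eps,K)\,\|v_0^n - v_0\|_{H^s}$, which tends to $0$ as $n \to \infty$ for each fixed $\eps$.

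Putting the pieces together: given $\sigma > 0$, pick $\eps$ so that each outer term is $< \sigma/3$ for all $n$, then $N$ so that $n \geq N$ forces the middle term $< \sigma/3$; this gives $\|v^n - v\|_{\sC(0,T;H^s)} < \sigma$ and proves continuity. I expect the main obstacle to be precisely the uniform control of the regularization error. The top-order energy estimate underlying Lemma~\ref{lemma:estimates_p} loses one derivative on the quasi-linear coefficients, and closing it rests on the Bona--Smith balance between the blow-up $\|v_\eps\|_{H^{s+1}} \lesssim \eta(\eps)^{-1}$ from Proposition~\ref{prop:estimates} and the lower-order decay $\|v_\eps - v_\delta\|_{H^{s-1}} = o(\eta(\eps))$ from Lemma~\ref{lemma:estimates_p}; checking that this balance, and the resulting $o(1)$ rate, holds uniformly over the compact set of data --- rather than merely pointwise in the datum --- is the step that requires care.
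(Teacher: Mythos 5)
Your proposal is correct and follows essentially the same Bona--Smith scheme as the paper: the same three-term decomposition $v^n-v=(v^n-v^n_\eps)+(v^n_\eps-v_\eps)+(v_\eps-v)$, with the regularization error made small uniformly in $n$ first and $n\to\infty$ taken afterwards at fixed $\eps$. The two points where you deviate are matters of execution rather than strategy. For the uniformity in $n$, you invoke compactness of $\{v_0\}\cup\{v^n_0:n\geq 0\}$ in $H^s(\R)$ to make the mollifier errors uniformly small; the paper instead keeps the data-dependent part explicit, writing $\|v^n_{\eps,0}-v^n_0\|_{H^s}\leq 2\|v^n_0-v_0\|_{H^s}+\|v_{\eps,0}-v_0\|_{H^s}$ and concluding with a $\limsup$ in $n$, which avoids any compactness argument but amounts to the same control. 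For the middle term at fixed $\eps$, you run a plain, gaugeless parabolic energy estimate, letting the dissipation $\eps^4\|\partial_x^{s+2}z\|^2_{L^2}$ absorb the non-skew-symmetric remainders at the price of a constant $C(\eps,K)$ blowing up as $\eps\to 0$; the paper instead reruns the gauged estimate of Lemma~\ref{lemma:estimates_p} with $\delta=\eps$ and controls the offending terms by a Gagliardo--Nirenberg inequality, likewise obtaining $\|v^n_\eps-v_\eps\|_{L^{\infty}(0,T;H^s)}\leq C_\eps\|v^n_0-v_0\|_{H^s}$ with $C_\eps\to+\infty$. Both routes are valid; yours is slightly more elementary at this step, while the paper's recycles estimates already established. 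Your explicit treatment of the $\sC^1(0,T;H^{s-3}(\R))$ convergence through the equation, and of the common existence time via Proposition~\ref{prop:apriori}, covers points the paper leaves implicit.
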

\begin{proof}
We aim at proving that for any sequence of initial conditions $(v^n_0)_{n \geq 0}$ going to $v_0$ in $H^s(\R)$, then the corresponding sequence of solutions $(v^n)_{n \geq 0}$ goes to $v$, the solution corresponding to the initial data $v_0$. We start by writing 
$$ \|v^n- v\|_{H^s} \leq \|v^n - v^n_{\eps}\|_{H^s} + \|v^n_\eps - v_\eps\|_{H^s}+ \|v_{\eps} - v\|_{H^s}\,.$$
 We first focus on the first and third terms. Let us rewrite what we obtained in \eqref{eq:bs_estimate}. For $\eps \geq \delta > 0$, using norm equivalences and taking the limit $\delta \to 0$, we get
\begin{equation}
\| v_\eps(t,\cdot) - v(t,\cdot) \|_{H^s} \leq  C_K \left(\| v_{\eps,0} - v_0 \|_{H^s} + F(\eps) \right)\,,
\end{equation}
where $F(\eps)$ goes to zero when $\eps$ goes to zero. This kind of estimate is also true for the difference between the solutions $v^n_\eps$ and $v^n$
\begin{equation}
\| v^n_\eps(t,\cdot) - v^n(t,\cdot) \|_{H^s} \leq  C_K \left(\| v^n_{\eps,0} - v^n_0 \|_{H^s} + F(\eps) \right)\,.
\end{equation}
Moreover, we have  
\begin{equation}
\begin{array}{rcl}
\| v^n_{\eps,0} - v^n_0 \|_{H^s} & \leq& \| v^n_{\eps,0} - v_{\eps,0} \|_{H^s} + \|v_{\eps,0} - v_0 \|_{H^s}  + \| v_0 - v^n_0 \|_{H^s} \,,\\[1ex]
& \leq & 2\|v^n_0 - v_0\|_{H^s} +  \|v_{\eps,0} - v_0 \|_{H^s}\,.
\end{array}
\end{equation}
Now for the second term, we have to revisit the proof of Proposition \ref{lemma:estimates_p} and more precisely the way we obtained estimate \eqref{eq:bs_estimate}. Here, we have to estimate the difference between two solutions with same regularization parameter $\eps$, but which satisfy the same regularized equation with different initial data. We proceed exactly the same way with some cancellations due to the fact that we actually take $\delta = \eps$ in the computation. The only point where we cannot follow the proof concerns the terms evaluated by  
$$ \|\partial^{s + 3 - q}_x v_\eps\|_{L^\infty} \|\partial^q_x (v^n_\eps-v_\eps )\|_{L^2}\,,$$
for $0 \leq q \leq 3$. 
Indeed, we have balanced the coefficients involving $\eps$ to actually get uniform estimates with respect to this parameter (it was the purpose of Lemma \ref{lemma:estimates_p}) but here, we cannot do the same. In fact we cannot hope to have a uniform estimate in $\eps$ but, by a Gagliardo-Nirenberg inequality we find 
$$ \|\partial^{s + 3 - q}_x v_\eps\|_{L^\infty} \|\partial^q_x  (v^n_\eps-v_\eps )\|_{L^2}  \leq \frac{C}{\eta(\eps)^{3 - q + 1/2}} \|v^n_\eps-v_\eps\|_{H^s}\,.$$
Finally, we obtain the following estimate 
\begin{equation}
\|v^n_\eps - v_\eps\|_{L^{\infty}H^s} \leq C_\eps \|v^n_0 - v_0\|_{H^s}\,,
\end{equation}
with the constant $C_\eps$ going to $+ \infty$ when $\eps$ goes to zero. Now, using all previous estimates 
$$\|v^n- v\|_{L^{\infty}H^s} \leq  C_K \left(\| v^n_{\eps,0} - v^n_0 \|_{H^s} + F(\eps) \right) + C_\eps \|v^n_0 - v_0\|_{H^s} +C_K \left(\| v_{\eps,0} - v_0 \|_{H^s} + F(\eps) \right)\,,$$
which finally yields 
$$\|v^n- v\|_{L^{\infty}H^s} \leq 2 C_K \left(\| v_{\eps,0} - v_0 \|_{H^s}+F(\eps) \right) + (2C_K + C_\eps) \|v^n_0 - v_0\|_{H^s}\,.$$
Now we find that 
$$\limsup_{n \to +\infty } \|v^n- v\|_{L^{\infty}H^s} \leq 2 C_K \left(\| v_{\eps,0} - v_0 \|_{H^s}+F(\eps) \right)\,.$$
This finishes the proof as the right hand side of the last inequality goes to zero when $\eps$ goes to zero for any initial condition $v_0$. 
\end{proof}
The proof of uniqueness is a straightforward corollary of the previous one. Instead of considering the difference between $v^n$ and $v$, we consider the difference between two different solutions $u$ and $v$ but with the same initial data $u_0 = v_0$. Following the exact same steps of the previous proof, with $ v_{\eps,0} =  u_{\eps,0}$, we deduce 
$$\|u- v\|_{L^{\infty}H^s} \leq 2 C_K \left(\| v_{\eps,0} - v_0 \|_{H^s}+F(\eps) \right)\,.$$
Then, we get uniqueness when taking the limit $\eps$ goes to zero. 

\section{Concluding remarks}

The present well-posedness result is set on the real line $x \in \R$. Nevertheless, as the proof use only the structure of the qKdV equation and does not uses any dispersion estimate, nothing prevents us from considering the same problem set on the torus $\R/\Xi\Z$ for any period $\Xi >0$. Then, we are able to give a similar well-posedness result on a unidimensional torus.  

\begin{theorem}\label{thm:qKdV}
Assume that $k\geq 4$. If $p=-f': I \subset \R \to \R$  is $\sC^{k+1}$ and $\capp:I \to \R^{+*}$ is $\sC^{k+2}$, then for all $\Xi > 0$, $\bv_0\in H^k(\R/\Xi\Z)$, there exists a time $T>0$ and a unique $\bv\in \sC(0,T;H^k(\R/\Xi\Z)) \cap \sC^1(0,T;H^{k-3}(\R/\Xi\Z))$ solution to \eqref{eq:qkdv} with initial data $v_0$. Moreover, $\bv_0\mapsto \bv$ maps continuously $H^k(\R/\Xi\Z)$ into $\sC(0,T;H^k(\R/\Xi\Z)) \cap \sC^1(0,T;H^{k-3}(\R/\Xi\Z))$.
\end{theorem}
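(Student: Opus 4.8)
The plan is to transfer the three-step scheme developed on $\R$ essentially verbatim, since every argument in Sections 2 and 3 rested only on the algebraic structure of \eqref{eq:motion} --- the skew-symmetry of the third order term and the gauge cancellations --- together with standard functional analytic tools, none of which invoked any dispersive smoothing or the unboundedness of the domain. Concretely, I would re-run, in order: the conservation of the skew-symmetric structure (Proposition~\ref{prop:structure}), the gauge estimates and norm equivalences (Lemmas~\ref{lemma:hk}, \ref{lemma:norm} and Proposition~\ref{prop:equiv}), the a priori bound (Proposition~\ref{prop:apriori}), the parabolic regularization with uniform estimates (Proposition~\ref{prop:estimates} and Lemma~\ref{lemma:estimates_p}), and finally the Bona--Smith continuity argument, checking at each stage that the analytic inputs survive on $\R/\Xi\Z$.

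First I would verify that the purely algebraic steps are insensitive to the domain. The induction of Proposition~\ref{prop:structure} manipulates only pointwise products of $v$ and its derivatives, and the integrations by parts producing the skew-symmetric cancellation $\langle \partial_x\alpha\partial_x\alpha\partial_x v_k | v_k\rangle = 0$ generate no boundary contribution on $\R/\Xi\Z$ by periodicity; hence Proposition~\ref{prop:structure} and the explicit gauge $\phi_k = \alpha^{-(k-1)/3}$, which solves the same ODE \eqref{eq:edos} pointwise, carry over unchanged. The only genuinely analytic inputs are the Sobolev embedding $H^1(\R/\Xi\Z)\hookrightarrow L^\infty(\R/\Xi\Z)$, which still holds, and the interpolation inequalities.

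The one substantive adjustment concerns the Gagliardo--Nirenberg inequalities underpinning Lemmas~\ref{lemma:hk} and \ref{lemma:eps4}. On a compact domain these hold with an additive lower order correction, e.g.
$$\|\partial_x^{\gamma} v\|_{L^\infty} \lesssim \|\partial_x^k v\|_{L^2}^{\theta}\,\|\partial_x^{q-k}v\|_{L^\infty}^{1-\theta} + \|v\|_{L^2}\,,$$
the extra term reflecting the loss of scaling invariance. Since this correction is of strictly lower order, it is absorbed into the controlling factors $C_{q,k}(\|\partial_x v\|_{L^\infty},\dots)\|v\|_{H^k}$ already present, so the conclusions of Lemmas~\ref{lemma:hk} and \ref{lemma:eps4} and of the norm equivalence Lemma~\ref{lemma:norm} persist with at most relabelled constants. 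For the existence step I would replace the real-line mollifier by the periodic truncation obtained from the same symbol $\chi$ acting on Fourier series; the analytic semigroup generated by $-\eps^4\partial_x^4$ on $H^s(\R/\Xi\Z)$ exists, with discrete spectrum, so the fixed point argument of \cite{BDD1d,BDDmultid} yields the regularized solutions exactly as before, and the periodic mollifier satisfies the same Bona--Smith approximation rates $\|v_0-v_{0,\eps}\|_{H^p}=o(\eta(\eps)^{q-p})$.

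The main obstacle, such as it is, lies in confirming that the lower order corrections in the periodic Gagliardo--Nirenberg estimates do not accumulate when summed over $1\le k\le s$ and that they still close under Gronwall's lemma exactly as in Propositions~\ref{prop:apriori} and \ref{prop:estimates}; this is routine because each correction is controlled by $\|v\|_{H^{s-1}}$ or by an $L^\infty$ norm of a lower derivative, quantities already absorbed into the weighted norm $|\cdot|_s$ of \eqref{eq:norm_weight}. With these verifications the three propositions transfer, and Gronwall's lemma together with the interpolation argument of Lemma~\ref{lemma:estimates_p} and Corollary~\ref{cor:existence} deliver existence, uniqueness and continuity on $\R/\Xi\Z$, completing the proof.
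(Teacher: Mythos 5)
Your proposal is correct and follows essentially the same route as the paper, which proves the periodic statement simply by observing that the real-line argument uses only the structure of the equation (skew-symmetry, gauges, energy estimates, parabolic regularization, Bona--Smith) and no dispersion or domain-specific estimates, hence transfers verbatim to $\R/\Xi\Z$. Your additional checks --- the boundary-free integrations by parts, the lower-order correction in the periodic Gagliardo--Nirenberg inequality, and the replacement of the mollifier by a periodic Fourier truncation with the same approximation rates --- are exactly the verifications the paper leaves implicit.
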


Though we have not carried out such task here, it is likely that one may adapt the strategy expounded here to deal with non integer indices of regularity and relax the constraint $k\geq4$ to $k>3+1/2$.

At first, we were aiming at a well-posedness result compatible with the study of the non-linear stability of a known solution. More precisely, we were initially looking for a solution of the equation as a perturbation around a bounded and infinitely differentiable given solution of (qKdV). Nevertheless, it appears that the gauge technique does not work well in this case. The functions defining the gauge are harder to construct when coefficients depend not only the unknown solution but also on the known profile, especially when both parts have different localization properties. A perspective we did not investigate here would be the study of a perturbations around a periodic state. The periodicity property, used similarly as in \cite{RZ15}, should enable us to overcome the previous difficulty and should give us the suitable gauge estimates.


\paragraph{Acknowledgement.} The author would like to thank S. Benzoni-Gavage and L.M. Rodrigues for their helpful guidance through many fruitful discussions. This work has been supported by ANR project BoND (ANR-13-BS01-0009-01). The author also wants to thank Fr\'ed\'eric Rousset for several insightful comments made during some of the BoND meetings.

\newpage 
\bibliographystyle{plain}
\addcontentsline{toc}{section}{References}
\bibliography{Mie.bib}

\begin{thebibliography}{10}

\bibitem{Ablowitz}
Mark~J. Ablowitz.
\newblock {\em Nonlinear dispersive waves}.
\newblock Cambridge Texts in Applied Mathematics. Cambridge University Press,
  New York, 2011.
\newblock Asymptotic analysis and solitons.

\bibitem{Bburgers}
Sylvie Benzoni-Gavage.
\newblock Local well-posedness of nonlocal {B}urgers equations.
\newblock {\em Differential Integral Equations}, 22(3-4):303--320, 2009.

\bibitem{BDD1d}
Sylvie Benzoni-Gavage, Rapha{\"e}l Danchin, and St{\'e}phane Descombes.
\newblock Well-posedness of one-dimensional {K}orteweg models.
\newblock {\em Electron. J. Differential Equations}, pages No. 59, 35 pp.
  (electronic), 2006.

\bibitem{BDDmultid}
Sylvie Benzoni-Gavage, Rapha{\"e}l Danchin, and St{\'e}phane Descombes.
\newblock On the well-posedness for the {E}uler-{K}orteweg model in several
  space dimensions.
\newblock {\em Indiana Univ. Math. J.}, 56(4):1499--1579, 2007.

\bibitem{BMR1}
Sylvie Benzoni-Gavage, Colin Mietka, and L.~Miguel Rodrigues.
\newblock Co-periodic stability of periodic waves in some {Hamiltonian PDEs}.
\newblock arXiv:1505.01382v1, 2015.

\bibitem{Bona-Smith}
J.~L. Bona and R.~Smith.
\newblock The initial-value problem for the {K}orteweg-de {V}ries equation.
\newblock {\em Philos. Trans. Roy. Soc. London Ser. A}, 278(1287):555--601,
  1975.

\bibitem{CCT03}
Michael Christ, James Colliander, and Terrence Tao.
\newblock Asymptotics, frequency modulation, and low regularity ill-posedness
  for canonical defocusing equations.
\newblock {\em Amer. J. Math.}, 125(6):1235--1293, 2003.

\bibitem{CKSTT03}
J.~Colliander, M.~Keel, G.~Staffilani, H.~Takaoka, and T.~Tao.
\newblock Sharp global well-posedness for {K}d{V} and modified {K}d{V} on
  {$\Bbb R$} and {$\Bbb T$}.
\newblock {\em J. Amer. Math. Soc.}, 16(3):705--749 (electronic), 2003.

\bibitem{CKS}
W.~Craig, T.~Kappeler, and W.~Strauss.
\newblock Gain of regularity for equations of kdv type.
\newblock {\em Annales de l'institut Henri Poincar\'e (C) Analyse non
  lin\'eaire}, 9(2):147--186, 1992.

\bibitem{DBanff}
Boris~A. Dubrovin.
\newblock Hamiltonian partial differential equations and {Painlev\'e}
  transcendents, May 2015.
\newblock Conference Dispersive Hydrodynamics : The Mathematics of Dispersive
  Shock Waves and Applications, at Banff International Research Station.

\bibitem{GGKM67}
C.~S. {Gardner}, J.~M. {Greene}, M.~D. {Kruskal}, and R.~M. {Miura}.
\newblock {Method for Solving the Korteweg-deVries Equation}.
\newblock {\em Physical Review Letters}, 19:1095--1097, November 1967.

\bibitem{GardnerKDV}
Clifford~S. Gardner, John~M. Greene, Martin~D. Kruskal, and Robert~M. Miura.
\newblock Korteweg-de{V}ries equation and generalization. {VI}. {M}ethods for
  exact solution.
\newblock {\em Comm. Pure Appl. Math.}, 27:97--133, 1974.

\bibitem{kato75}
Tosio Kato.
\newblock Quasi-linear equations of evolution, with applications to partial
  differential equations.
\newblock In {\em Spectral theory and differential equations ({P}roc.
  {S}ympos., {D}undee, 1974; dedicated to {K}onrad {J}\"orgens)}, pages 25--70.
  Lecture Notes in Math., Vol. 448. Springer, Berlin, 1975.

\bibitem{KPV93}
Carlos~E. Kenig, Gustavo Ponce, and Luis Vega.
\newblock The {C}auchy problem for the {K}orteweg-de {V}ries equation in
  {S}obolev spaces of negative indices.
\newblock {\em Duke Math. J.}, 71(1):1--21, 1993.

\bibitem{KPV93mkdv}
Carlos~E. Kenig, Gustavo Ponce, and Luis Vega.
\newblock Well-posedness and scattering results for the generalized
  {K}orteweg-de {V}ries equation via the contraction principle.
\newblock {\em Comm. Pure Appl. Math.}, 46(4):527--620, 1993.

\bibitem{KPV96}
Carlos~E. Kenig, Gustavo Ponce, and Luis Vega.
\newblock A bilinear estimate with applications to the {K}d{V} equation.
\newblock {\em J. Amer. Math. Soc.}, 9(2):573--603, 1996.

\bibitem{KPV04}
Carlos~E. Kenig, Gustavo Ponce, and Luis Vega.
\newblock The {C}auchy problem for quasi-linear {S}chr\"odinger equations.
\newblock {\em Invent. Math.}, 158(2):343--388, 2004.

\bibitem{limponce02}
Wee~Keong Lim and Gustavo Ponce.
\newblock On the initial value problem for the one dimensional quasi-linear
  {S}chr\"odinger equations.
\newblock {\em SIAM J. Math. Anal.}, 34(2):435--459 (electronic), 2002.

\bibitem{LPS}
Felipe Linares, Gustavo Ponce, and Derek~L. Smith.
\newblock On the regularity of solutions to a class of nonlinear dispersive
  equations.
\newblock arXiv:1510.02512, 2015.

\bibitem{lunardi}
Alessandra Lunardi.
\newblock {\em Analytic semigroups and optimal regularity in parabolic
  problems}.
\newblock Modern Birkh\"auser Classics. Birkh\"auser/Springer Basel AG, Basel,
  1995.
\newblock [2013 reprint of the 1995 original] [MR1329547].

\bibitem{pazy}
A.~Pazy.
\newblock {\em Semigroups of linear operators and applications to partial
  differential equations}, volume~44 of {\em Applied Mathematical Sciences}.
\newblock Springer-Verlag, New York, 1983.

\bibitem{RZ15}
L.M. Rodrigues and K.~Zumbrun.
\newblock Periodic-coefficient damping estimates, and stability of
  large-amplitude roll waves in inclined thin film flow.
\newblock {\em SIAM J. Math. Anal.}, 2015.
\newblock To appear.

\bibitem{Serre}
Denis Serre.
\newblock {\em Systems of conservation laws. 1}.
\newblock Cambridge University Press, Cambridge, 1999.
\newblock Hyperbolicity, entropies, shock waves, Translated from the 1996
  French original by I. N. Sneddon.

\end{thebibliography}

\end{document}